\documentclass[12pt,a4paper,english]{scrartcl}

\usepackage{lmodern}
\usepackage{babel,csquotes,xpatch}
\usepackage[
    backend=biber,
    style=alphabetic,
    backref=true
]{biblatex}
\usepackage{enumitem}
\setenumerate{label=\textup{(\roman*)}}

\usepackage{amsmath, mathtools}
\usepackage{amssymb}
\usepackage{amsthm, thmtools}
\usepackage{tikz-cd}
\usepackage{xcolor}
\usepackage{braket}
\usepackage{bm, bbm}
\usepackage{enumitem}
\usepackage{booktabs}
\usepackage{nicematrix}

\usepackage{microtype}

\usepackage{etoolbox}

\usepackage[colorlinks]{hyperref}
\usepackage[nameinlink]{cleveref}

\makeatletter
\renewcommand*\author[1]{%
  \stepcounter{author}%
  \ifnum\c@author=1
    \gdef\@author{#1}%
  \else
    \xdef\@author{\unexpanded\expandafter{\@author\and#1}}%
  \fi
  \csgdef{author@\the\c@author}{#1}}
\newcommand*\email[1]{%
  \csgdef{email@\the\c@author}{#1}}
\newcommand*\address[1]{%
  \csgdef{address@\the\c@author}{#1}}
\AtEndDocument{%
  \xdef\author@count{\the\c@author}%
  \c@author=1
  \print@authors}
\newcommand*\print@authors{%
  \ifnum\c@author>\author@count
  \else
    \print@author{\the\c@author}%
    \advance\c@author by 1
    \expandafter\print@authors
  \fi}
\newcommand*\print@author[1]{%
  \par\medskip
  \begin{tabular}{@{}l@{}}%
    \textsc{Addresses of \csuse{author@#1}}\\
    \csuse{address@#1}\\
    \textit{E-mail address}:
    \href{mailto:\csuse{email@#1}}{\texttt{\csuse{email@#1}}}
  \end{tabular}}
\makeatother

\declaretheorem[numberwithin=section]{theorem}
\declaretheorem[numberlike=theorem]{lemma, corollary,proposition}
\declaretheorem[numberlike=theorem,style=definition]{definition}
\declaretheorem[numberlike=theorem,style=remark,qed=$\diamondsuit$]{example}
\declaretheorem[numberlike=theorem,style=remark]{remark}

\declaretheoremstyle[notefont=\bfseries, notebraces={(}{)}]{conjecture}
\declaretheorem[numberlike=theorem,style=conjecture,refname={conjecture,conjectures},Refname={Conjecture,Conjectures}]{conjecture}

\newcommand{\NN}{\mathbb{N}}
\newcommand{\ZZ}{\mathbb{Z}}
\newcommand{\QQ}{\mathbb{Q}}
\newcommand{\RR}{\mathbb{R}}
\newcommand{\CC}{\mathbb{C}}
\newcommand{\KK}{\mathbb{K}}
\newcommand{\PP}{\mathbb{P}}

\newcommand{\VV}{\mathcal{V}}
\renewcommand{\AA}{\mathbb{A}}
\newcommand{\mm}{\mathfrak{m}}
\newcommand{\OO}{\mathcal{O}}

\DeclareMathOperator{\codim}{codim}

\DeclareMathOperator{\mult}{mult}
\DeclareMathOperator{\ord}{ord}
\newcommand{\reg}{\mathrm{reg}}

\DeclareMathOperator{\rank}{rk}

\DeclareMathOperator{\Matroid}{M}

\DeclareMathOperator{\Res}{Res}
\DeclareMathOperator{\Chow}{Ch}
\DeclareMathOperator{\PGr}{\mathbb{G}}
\DeclareMathOperator{\Ker}{Ker}

\DeclareMathOperator{\len}{len}
\DeclareMathOperator{\Row}{Row}
\DeclareMathOperator{\Spec}{Spec}

\DeclareMathOperator{\assgr}{gr}
\DeclareMathOperator{\TC}{TC}

\title{Splitting the Matroid Determinant}
\author{Clara Briand}
\address{École Normale Supérieure, France}
\email{clara.briand@ens.psl.eu}

\author{Leonie Kayser}
\address{Max Planck Institute for Mathematics in the Sciences, Leipzig\\
and University of Bern, Switzerland}
\email{leonie.kayser@unibe.ch}

\author{Julian Weigert}
\address{Max Planck Institute for Mathematics in the Sciences, Leipzig\\and Georg-August-Universität Göttingen, Germany}
\email{julian.weigert@mis.mpg.de}

\date{\today}

\begin{document}

\maketitle

\begin{abstract}
The principal matroid determinant $E_L$ of a linear space $L \subseteq\PP^n$ has been introduced in recent work by Matsubara-Heo and Telen. In this paper, we give the complete factorization of this polynomial into its irreducible components, proving a conjecture of the aforementioned authors. Our methods rely on bounding local multiplicities and an étale-local description of the strata of reciprocal linear spaces developed by Elias, Proudfoot and Wakefield. We also discuss analogous questions for other coordinate-wise powers of linear spaces.
\end{abstract}

\tableofcontents

\section{Introduction}
\label{sec:intro}
%%%%%%%%%%%%%%%%%%%%%%%%%%%%%%%%%%%%%%%%%%%%%%%%%%%cc

The theory of discriminants, resultants and determinants, carefully developed by Gelfand, Kapranov and Zelevinsky in \cite{GKZ}, plays a central role in algebraic geometry, particularly in applications. Loosely speaking, a discriminant is a polynomial that vanishes whenever a geometric object of study behaves non-generically. A particularly rich class of examples arises from toric geometry: given a finite collection of lattice points $A = \{a_0,\dots, a_n\} \in \ZZ^d$, the equation $\Delta_A$ of the projective dual $X_A^\vee$ of the associated toric variety vanishes on those points $(c_a)_{a\in A} \in (\PP^n)^*$ for which the hypersurface defined by the polynomial $f_c(x)\coloneq\sum_{a\in A}c_ax^a\in \CC[x_0^{\pm1},\dots,x_n^{\pm1}]$ is singular. 

A beautiful aspect of the toric case is the behavior of the associated \emph{$A$-discriminants} and \emph{$A$-determinants} under the natural stratification of toric varieties into torus orbits \cite[Chapter 10]{GKZ}. A similarly combinatorial stratification arises when studying \emph{reciprocal linear spaces} $L^{-1}$, the image closure of linear spaces under the Cremona transform. Such reciprocal linear spaces decompose similarly according to the flats of the associated matroid $M\coloneq\Matroid(L)$ \cite{Proudfoot2006}. Building on this idea, Matsubara-Heo and Telen propose the study of the \emph{principal matroid determinant} $E_L \subseteq (\PP^n)^*$ \cite[Definition 3.2] {MatsubaraHeoTelen2026}, the central object of study in this article. This hypersurface is a substitution of the Chow form of the projective variety $L^{-2}$. A hyperplane $H \in (\PP^n)^*$ belongs to the vanishing locus of $E_L$ whenever the intersection $(\CC^\times)^n\cap L^{-1}\cap H$ does not have the generic Euler characteristic \cite[Theorem 1.2.4]{MatsubaraHeoTelen2026}. This directly parallels similar result in the toric setup due to Esterov \cite[Theorem 1.8]{Esterov2013Discriminant}. The construction of the principal matroid determinant naturally generalizes to negative powers of linear spaces. We thus define \emph{generalized matroid determinants} $E_{L,-k}$ for any $k \geq 1$ (see \Cref{def:matroid_det} below for details), with $E_{L,-2} = E_L$ being the principal matroid determinant.

While many properties of principal $A$-determinants directly generalize to principal matroid determinants, one particularly interesting property remained open in \cite[Conjecture 7.1]{MatsubaraHeoTelen2026}: the factorization of the principal matroid determinant $\Delta(L^1_F)$ as a product of matroid discriminants with multiplicities. The aim of this article is to fill this gap by proving the following theorem.

\begin{theorem}
\label{thm:intro_main}
Let $L \subseteq \PP^n$ be a linear space with loopless matroid $M=\Matroid(L)$ and let $k\geq 2$. The generalized matroid determinant $E_{L,-k}$ decomposes into irreducible factors as
\[
E_{L,-k} = \prod_{F \textup{ conn.\ flat of }M} \Delta(L_{F}^{-k+1})^{m_F}, \qquad m_F = k^{\rank(M/F)-c(M)+1}\mu^+(M/F).
\]
The multiplicity $m_F$ further equals the local multiplicity of $L^{-k+1}$ along the stratum indexed by $F$. In particular, \cite[Conjecture 7.1]{MatsubaraHeoTelen2026} holds true.
\end{theorem}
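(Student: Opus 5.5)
We follow the GKZ strategy for principal $A$-determinants, adapted to reciprocal linear spaces. By definition (\Cref{def:matroid_det}), $E_{L,-k}$ is the Chow form of $L^{-k}$ evaluated at a substitution, so its zero locus is the set of hyperplanes $H$ for which $H\cap L^{-k+1}$ fails to be generic. Concretely, these are hyperplanes tangent to some stratum of $L^{-k+1}$.

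\textbf{Step 1: set-theoretic support.} The strata of $L^{-k+1}$ are indexed by flats $F$, and each stratum closure is the reciprocal power $L_F^{-k+1}$ of the restriction/contraction. Only strata whose dual variety is a hypersurface contribute a factor. We show that this happens exactly when $F$ is connected, so the reduced support of $E_{L,-k}$ is $\bigcup_F \nabla(L_F^{-k+1})$. Non-connected flats have non-hypersurface duals, since the product structure forces a dual defect. Irreducibility of each $\Delta(L_F^{-k+1})$ is inherited from the irreducibility of the conormal variety of the stratum.

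\textbf{Step 2: exponents via local multiplicities.} Following \cite[Chapter 10]{GKZ}, the order of vanishing of the Chow form along the dual of a stratum equals the multiplicity of the variety along that stratum, times the degree of the corresponding discriminant factor. We therefore compute $\mult_{F}(L^{-k+1})$ using the étale-local description of Elias, Proudfoot and Wakefield. Near a generic point of the stratum indexed by $F$, the reciprocal plane is étale-locally a product of a smooth germ with the cone over the reciprocal plane of the contraction $M/F$, and the analogous statement should hold for coordinate-wise powers. The local multiplicity is then the degree of the projectivized $(-k+1)$-power of $L_{M/F}$ at the cone point. This degree is the $k$-power analogue of the reciprocal plane degree $\mu^+(M/F)$, which suggests $k^{\rank(M/F)-c(M)+1}\mu^+(M/F)$. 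The degree is computed by pulling back along the coordinatewise-power map, which has degree $k^{\dim}$ on the torus, modulo the diagonal scaling contributing the $-c+1$ correction.

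\textbf{Step 3: matching upper and lower bounds.} The exponents are the delicate part, and this is where we expect the main obstacle. For a lower bound, we restrict to a generic line through a general point of $\nabla(L_F^{-k+1})$ and bound the order of vanishing below by the intersection multiplicity computed in Step 2. For an upper bound, we compare total degrees: the degree of $E_{L,-k}$ is known from the Chow form, namely $\deg L^{-k}$ times the dimension factor. We then show that this total equals $\sum_F m_F\deg\Delta(L_F^{-k+1})$, either by a deletion–contraction recursion on $\mu^+$ or by a direct Euler-characteristic count paralleling \cite[Theorem 1.2.4]{MatsubaraHeoTelen2026}.

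The hardest part is this degree bookkeeping, together with verifying that the étale-local product structure holds for powers other than $-1$. Finally, specializing to $k=2$ yields \cite[Conjecture 7.1]{MatsubaraHeoTelen2026}.
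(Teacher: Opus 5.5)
Your overall architecture matches the paper's: set-theoretic support via the flat stratification (essentially \Cref{th: ppalmatdet}), a lower bound for $m_F$ by a local multiplicity, and a global degree count forcing equality. But Step 2, which is where the exponent actually comes from, does not work, for two reasons. First, the multiplicity that bounds $m_F$ is that of $L^{-k}$, the variety whose Chow form is pulled back. The paper proves $m_F=\mult_{L^{-k}_F}(L^{-k})$, so the \enquote{$L^{-k+1}$} in the statement is a slip: for $k=2$, \Cref{ex: multinRLS} gives $\mult_{L^{-1}_F}(L^{-1})=\mu^+(M/F)$, which is not $m_F$ in general. Second, the \enquote{analogous statement for coordinate-wise powers} is false. $L^{-k}$ is not étale-locally a smooth germ times the cone over $(L/F)^{-k}$, because the power map is ramified along the hyperplanes $x_j=0$, $j\notin F$, which are transverse to the stratum. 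Such a product would give multiplicity $\deg(L/F)^{-k}=k^{\rank(M/F)-c(M/F)}\mu^+(M/F)$, which is strictly less than $m_F$ for every proper nonempty connected flat of a connected $M$. Concretely, take $M=U_{2,3}$ (e.g.\ $L=\{x_0+x_1+x_2=0\}$), $k=2$, $F=\{0\}$. Then $L^{-2}$ is a rational plane quartic with an ordinary cusp at $e_0$ (locally $u\mapsto(u^2,u^2/(1+u)^2)$), so its multiplicity there is $2=m_F$, whereas $(L/F)^{-2}$ is a point. The formula you write down therefore does not follow from your own description, which would produce $k^{\rank(M/F)-c(M/F)}\mu^+(M/F)$, or the analogue with $k-1$ for the $(-k+1)$-st power. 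It is a guess.

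The missing idea is to use the Elias--Proudfoot--Wakefield product structure only where it holds, on $\widehat{L^{-1}}$, and transport the computation along the finite map $\widehat{L^{-1}}\to\widehat{L^{-k}}$ of degree $k^{c(M)}$. By \Cref{thm: ZariskiSamuel}, $k^{c(M)}\mult_q\widehat{L^{-k}}=\sum_{p^k=q}e(J_p,S)$, where the pulled-back ideal $J_p=\langle x_i-p_i\ (i\in F),\ x_j^k\ (j\notin F)\rangle$ is \emph{not} the maximal ideal. Under the étale map it splits as a smooth factor times $\langle z_j^k\rangle$ on the cone over $(L/F)^{-1}$. That ideal is a reduction of $\mm^k$, so $e(J_p,S)=k^{\rank(M/F)}\mu^+(M/F)$, and summing over the $k^{c(M|_F)}$ preimages $p$ gives \Cref{lem: lowerbound2}.

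Step 3 also leaves its essential inputs unstated. The lower bound needs $\mult_p(X)\le\mult_\Lambda\mathcal{C}(X)$ for $p\in\Lambda\cap X$ (\Cref{lem: chowMult}, via the Grassmann bundle $\mathcal{I}\to X$ and the birational projection $\mathcal{I}\to\mathcal{C}(X)$). It also needs the fact that order of vanishing can only grow under pullback along $z\mapsto\ker A\operatorname{diag}(z)$ (\Cref{lem: multdommap}). Restricting to a line is harmless, but it connects to nothing without that inequality. This route gives only an inequality, not the equality \enquote{following GKZ} that you assert, and no degree factor belongs in $m_F$. The degree count needs $\deg\Delta(L_F^{-k+1})=k^{\rank(M|_F)-1}\beta(M|_F)$ for connected $F$ \cite[Theorem 4.8]{briand2025}. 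It also needs the identity $\rank(M)\mu^+(M)=\sum_F\beta(M|_F)\mu^+(M/F)$, which the paper obtains by Möbius inversion (\Cref{lem:combinatorialErecDeg}). With both, $\sum_F m_F\deg\Delta(L_F^{-k+1})\ge k^{d-c(M)+1}\rank(M)\mu^+(M)=\deg E_{L,-k}$, which forces equality at every connected flat.
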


While the components of the hypersurface $\VV(E_{L,-k})$ were already known set-theoretically for $k=2$\cite[Theorem 1.2]{MatsubaraHeoTelen2026}, our work contributes a precise description of the exponents $m_F$, both in terms of matroid invariants and in terms of the local geometry of $L^{-k}$ for any $k\geq 2$.
Our main proof can be outlined as follows.
\begin{enumerate}[label=(\arabic*)]
\item In \Cref{sec:lower bound} we show that $m_F$ is a lower bound for the multiplicity of the factor $\Delta(L_F^{-k+1})$ inside $E_{L,-k}$. To obtain this lower bound we use the definition of $E_{L,-k}$ as a pullback of the Chow form of $L^{-k}$. We first track how local multiplicities may change when passing from $L^{-k}$ to the Chow incidence variety and then through the projection to the Chow form and it's pullback to $E_{L,-k}$. In this way we find that the local multiplicity of $L^{-k}$ along the stratum indexed by a flat $F$ is a lower bound to the multiplicity of the factor in $E_{L,-k}$.
\item Next, in \Cref{sec:local mult of L-k} we compute the local multiplicity of $L^{-k}$ along every flat stratum exactly: We first translate this into a local computation on $L^{-k+1}$ and then use that locally around a point in a flat stratum, $L^{-k+1}$ splits into a product of smaller reciprocal linear spaces via an étale map due to \cite{ELIAS201636}.
\item Finally, in \Cref{sec:match} we argue that the multiplicities predicted by  \Cref{thm:intro_main} sum up to the correct degree and hence the lower bounds must all be equalities. For the last step we use Möbius inversion, as well as the formulas for the degrees of $\Delta(L_F^{-k+1})$ proven in our previous work \cite[Corollary 1.4]{briand2025}.
\end{enumerate}

This manuscript is organized around the proof of the main result in \Cref{sec:main proof}. The preliminary \Cref{sec:prelim} recalls and generalizes the definition of principal matroid determinants, comparing it to the toric story, as well as providing a brief primer on local multiplicities in commutative algebra. After the proof of the main theorem, in \Cref{sec:otherMulties} we make first steps towards studying the analogously constructed generalized matroid determinants of $L^k$, $k \geq -1$ and state analogous conjectures about their factorization.

\subsection*{Acknowledgements}
C.B.\ thanks the Max Planck Institute for Mathematics in the Sciences for its hospitality and the Fondation de l'ENS for its financial support. J.W.\ was supported by the SPP 2458 “Combinatorial Synergies”, funded by the DFG grant 539677510.

\section{Preliminaries}\label{sec:prelim}

Throughout this article, we work over an algebraically closed field $\KK$ of characteristic zero. All rings considered are Noetherian and there is no harm in assuming them to be essentially of finite type over $\KK$.

\subsection{Generalized matroid determinants}

In this section, we give a brief introduction to discriminants from the point of view of projective duality, an approach pioneered in \cite{GKZ}. We then recall the construction of the principal $A$-determinant $E_A$ as well as its matroidal analogue, namely, the principal matroid determinant $E_L$ introduced in \cite{MatsubaraHeoTelen2026} and its generalizations.

\begin{definition}
Let $X \subset \PP^n$ be an irreducible projective variety. Its \emph{dual variety} is the irreducible variety
\[
    X^\vee \coloneq \overline{\Set{H \in (\PP^n)^* | \exists x \in X_{\text{reg}} \text{ such that } H \supseteq T_xX}}.
\]
If $\codim(X^\vee) = 1$, then there exists a unique, up to scaling, irreducible polynomial $\Delta_X$ such that $X^\vee = \VV(\Delta_X)$ called the \emph{$X$-discriminant}. If $\codim X^\vee >1$ then we set $\Delta_X \coloneq 1$.
\end{definition}

$X$-discriminants will be the building blocks of the principal determinants.
Another important construction is that of the $X$-resultant. Recall that for any $0 \leq k \leq n$, $\PGr(k,\PP^n)$ parametrizes projective linear spaces of dimension $k$ in $\PP^n$. 
Suppose $\dim X = d$. Then, generically, a linear space $[\Lambda] \in \PGr(n-d-1,\PP^n)$, of codimension $d+1$, will not intersect $X$. The \emph{Chow variety} of $X$ is formed by those linear spaces which do:
\[
    \mathcal{C}(X) \coloneq \Set{ [\Lambda] \in \PGr(n-d-1,\PP^n) | \Lambda \cap X \neq \emptyset}.
\]
The Chow variety is an irreducible hypersurface in $\PGr(n-d-1,\PP^n)$ \cite[Proposition~2.2]{GKZ}. It can be defined by a single polynomial $\Chow(X)$ in the Plücker variables, uniquely up to the Plücker relations. This polynomial of the same degree as $X$ is the \emph{Chow form} of $X$. 
There is another natural system of coordinates on the Grassmannian. Namely, consider the rational map
\[
r\colon \KK^{(d+1)\times(n+1)} \dashrightarrow \PGr(n-d-1,\PP^n), \qquad U \mapsto [\PP(\Ker U)], 
\]
which is defined the dense open subset of full-rank matrices, the (non-compact) Stiefel manifold. The entries of $U$ are called the \emph{Stiefel coordinates} on the Grassmannian; the map $r$ essentially amounts to taking all $(d+1)$-minors of $U$. Writing the Chow form $\Chow(X)$ in these coordinates yields the $X$-resultant.
\begin{definition}
The \emph{$X$-resultant} is $\Res(X) \coloneq r^*\Chow(X) \in \KK[u_{ij} \mid 0 \leq i \leq d, 0 \leq j \leq n]$.
\end{definition}

The case where $X$ is a toric variety has been extensively studied in \cite{GKZ}. We give here a short overview of some of the main constructions and results in this setting.

\begin{definition}
Let $A \in \ZZ^{(d+1)\times(n+1)} $ be of rank $d+1$, such that $(1,\dots,1) \in \Row(A)$. Let $a_0,\dots,a_n \in \KK^{d+1}$ be the columns of $A$. The projective toric variety $X_A$ associated to $A$ is the closure of the image of the map
\[
  (\KK^\times)^{d+1} \to  \PP^n , \qquad x \mapsto  [x^{a_0}:\dots:x^{a_n}]
\]
where $x^{a_i} \coloneq x_0^{a_{i,0}}\dotsm x_d^{a_{i,d}}$ are the Laurent monomials with exponent vectors $a_i$.
\end{definition}

The $X_A$-discriminant and the $X_A$-resultant are known as the \emph{$A$-discriminant} and \emph{$A$-resultant}, respectively. The \emph{principal $A$-determinant} is then defined as a specialization of the $A$-resultant.
\begin{definition}
For $z \in \KK^{n+1}$ consider the matrix $A_z \coloneq A\cdot\operatorname{diag}(z) \in \KK^{(d+1)\times(n+1)}$ and let $i\colon \KK^{n+1} \to \KK^{(d+1)\times(n+1)}$ be the map linear map $z \mapsto A_z$. The \emph{principal $A$-determinant} is 
\[
    E_A \coloneq i^*\Res(X_A) = i^*r^*\Chow(X_A) \in \KK[z_0,\dots,z_n].
\]
\end{definition}

For $\KK=\CC$, Esterov showed that the zero locus of $E_A$ consists of all the points $z \in (\PP^n)^*$ such that the topological Euler characteristic of the hyperplane section $|\chi(X_A \cap \PP((\CC^\times)^{n+1})\cap H_z)|$ is less than its generic value \cite[Theorem 1.8]{Esterov2013Discriminant}, see also \cite[Section 3]{Fevola2024}. Here, $H_z$ is the hyperplane $\set{x | z_0x_0+\dots+z_nx_n=0} \subseteq \PP^n$.

The prime factorization of $E_A$ can be described in terms of the lattice polytope $P_A \coloneq \text{Conv}(a_0,\dots,a_n)\subset\RR^{d+1}$ associated to $X_A$. Recall that a toric variety admits a stratification by torus orbits. These strata are identified with faces of $P_A$ via the orbit-cone correspondence, and they give rise to the factors of $E_A$. Let $\mathrm{Vol}(-)$ denotes the (normalized) lattice volume.

\begin{theorem}[{\cite[Theorem 10.1.2]{GKZ}}]\label{th: ppalAdet}
The principal A-determinant has degree 
$
\deg(E_A) = (d+1)\mathrm{Vol}(P_A).
$
It factors as
\[
E_A = \prod_{Q \leq P_A} \Delta(X_{A|_Q})^{m_Q}
\]
where $Q$ ranges over the faces of $P_A$, and $m_Q = [P_A:Q]\mathrm{Vol}(Q)$.
\end{theorem}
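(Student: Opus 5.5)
The statement just given is the classical GKZ theorem on the principal $A$-determinant. We only need to sketch its proof, following the same three-step strategy the introduction outlines for the matroid case. That is: lower bounds on multiplicities via local geometry, an exact computation of those local multiplicities, and a degree count that forces equality.

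\emph{Degree.} The Chow form $\Chow(X_A)$ has degree $\deg X_A$ in the Plücker coordinates. Pulling it back via $r$, which takes $(d+1)$-minors, gives a polynomial of degree $\deg X_A$ in each of the $d+1$ rows of $U$. The linear substitution $i$ then makes every row linear in $z$, so
\[
\deg E_A=(d+1)\deg X_A=(d+1)\mathrm{Vol}(P_A),
\]
by Kushnirenko's theorem. We must also check that $i^*r^*\Chow(X_A)\neq 0$. This holds because for generic $z$ the linear forms $\sum_j a_{ij}z_jx_j$ have no common zero on $X_A$: the torus-invariant Euler-derivative system of a generic Laurent polynomial has no solutions, on the torus or on any orbit.

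\emph{Support and lower bounds.} A point $z$ lies in $\VV(E_A)$ iff the $d+1$ forms $\sum_j a_{ij}z_jx_j$ have a common zero on $X_A$. On the orbit $O_Q$ for a face $Q$, these forms restrict to the logarithmic derivatives $x_i\partial_i f_{z|_Q}$. So a common zero on $O_Q$ means exactly that $f_{z|_Q}$ has a singular point in the torus, i.e.\ $z|_Q\in X_{A|_Q}^\vee$. Hence $\VV(E_A)=\bigcup_Q \VV(\Delta(X_{A|_Q}))$, which gives the factors. For the multiplicity of $\Delta(X_{A|_Q})$, I would follow the incidence-variety argument. The order of vanishing of the Chow form along the locus of spaces meeting $X_A$ at a generic point of $O_Q$ is at least the local multiplicity $e_{O_Q}(X_A)$. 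Transported through $r$ and $i$, this shows the exponent is at least $e_{O_Q}(X_A)$, possibly times a further factor; this step needs care.

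\emph{Local multiplicity.} Étale-locally near a point of $O_Q$, $X_A$ is the product of $O_Q$ with the affine toric variety of the cone over $P_A$ modulo $\mathrm{span}(Q)$. The multiplicity of that affine toric variety at its vertex is a normalized volume of the quotient polytope region, and equals $[P_A:Q]$, the GKZ subdiagram-volume index. The exponent $m_Q=[P_A:Q]\mathrm{Vol}(Q)$ carries the extra factor $\mathrm{Vol}(Q)$. This factor arises because $\Delta(X_{A|_Q})$ appears through $E_{A|_Q}$-type behaviour: a degree/lattice-index contribution from the torus-orbit directions, i.e.\ $\deg O_Q$-closure $=\mathrm{Vol}(Q)$ intersecting the generic fiber. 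So the lower bound reads $m_Q$.

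\emph{Matching degrees.} This is the main obstacle. We need
\[
\sum_Q [P_A:Q]\,\mathrm{Vol}(Q)\,\deg\Delta(X_{A|_Q})=(d+1)\mathrm{Vol}(P_A).
\]
I would obtain it from the GKZ formula for $\deg\Delta_A$ as an alternating sum $\sum_{Q}(-1)^{\operatorname{codim}Q}(\dim Q+1)\mathrm{Vol}(Q)$ weighted by subdiagram indices (the smooth case), then apply Möbius inversion on the face lattice of $P_A$. Once the sum matches, all lower bounds are forced to be equalities, which proves the factorization.
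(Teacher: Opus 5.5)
The paper gives no proof of this statement; it only cites GKZ. Your architecture (degree from the Chow form, support via Euler derivatives on orbits, a local-multiplicity lower bound, then a degree count) is the one the paper uses for $E_{L,-k}$. The degree and support steps are fine.

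The genuine gap is the factor $\mathrm{Vol}(Q)$ in the exponent. Your justification (``$E_{A|_Q}$-type behaviour'', ``$\deg\overline{O_Q}$ intersecting the generic fiber'') is not an argument, and no argument can exist. The Chow-form lower bound you describe yields exactly $\mult_{O_Q}(X_A)$. That quantity depends only on the geometry of $P_A$ transversal to $Q$: the lattice index of $A\cap Q$ times the subdiagram volume of $A/Q$. This is the analogue of $m_F$ depending only on $M/F$. By contrast, $\mathrm{Vol}(Q)=\deg X_{A\cap Q}$ enters only $\deg\Delta(X_{A|_Q})$.

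A concrete check: for $A=\{0,1,2\}$ (homogenized), $E_A=c_0c_2(c_1^2-4c_0c_2)$ has degree $4=(d+1)\mathrm{Vol}(P_A)$, and $\Delta_A$ occurs with exponent $1$. Your reading gives $m_{P_A}=1\cdot\mathrm{Vol}(P_A)=2$. Your claimed lower bounds would then force $\deg E_A\geq 1+1+2\cdot 2=6$, and your target identity $\sum_Q[P_A:Q]\,\mathrm{Vol}(Q)\deg\Delta(X_{A|_Q})=(d+1)\mathrm{Vol}(P_A)$ fails ($6\neq 4$). So the exponent as printed cannot be proved. What GKZ establish is $m_Q=\mult_{O_Q}(X_A)$, and that is what your argument should aim for.

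Relatedly, the \'etale-local product ``$O_Q\times$ affine toric slice'' needs $\mathbb{Z}(A\cap Q)$ to be saturated in $\mathbb{Z}A$. Otherwise $X_A$ has several branches along $O_Q$, and this is where the lattice index enters. For example, $A=\{(0,0),(2,0),(0,1),(1,1)\}$ gives $x_0x_3^2=x_1x_2^2$, which has two branches $x_3=\pm\sqrt{x_1/x_0}\,x_2$ along the orbit of the edge $[(0,0),(2,0)]$.

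Even with the corrected exponent, your equality step does not go through in general. The alternating-sum formula for $\deg\Delta_A$ that you plan to M\"obius-invert is only available when $X_A$ is smooth. In that case every $\mult_{O_Q}(X_A)=1$, so the count merely confirms that all exponents are $1$. For singular $X_A$, where the exponents are nontrivial, the dual degrees involve Euler obstructions rather than multiplicities. There is no ready-made analogue of the formula $\deg(L_F^{-k+1})^\vee=k^{\rank(M|_F)-1}\beta(M|_F)$ that makes the paper's degree count work. You would need one of two things:
\begin{enumerate}
\item a general dual-degree formula together with the matching combinatorial identity $\sum_Q \mult_{O_Q}(X_A)\deg\Delta(X_{A|_Q})=(d+1)\mathrm{Vol}(P_A)$, proved independently; or
\item a direct local argument for the reverse inequality, namely that the pulled-back Chow form vanishes to order at most $\mult_{O_Q}(X_A)$ at a generic point of $\VV(\Delta(X_{A|_Q}))$.
\end{enumerate}
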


For other varieties with a similar stratification, one may hope to construct a polynomial with similar properties to the principal $A$-determinant. This idea was pursued by  Matsubara-Heo and Telen in the construction of the \emph{principal matroid determinant} in \cite{MatsubaraHeoTelen2026}, which replaces toric varieties with reciprocal linear spaces.

\begin{definition}
Let $k \in \ZZ \setminus \{0\}$, and let $L \subset \PP^n$ be a linear space not contained in any coordinate hyperplane. The \emph{$k$-th power map} is 
\[
\phi_k \colon L \dashrightarrow \PP^n, \qquad x \mapsto x^k \coloneq [x_0^k:\dots:x_n^k].
\]
Define $L^k \coloneq \overline{\phi_k(L)}$: we say that $L^{-1}$ is a \emph{reciprocal linear space}.
\end{definition}

Notice that $\phi_k$ is well-defined everywhere for $k \geq 1$, but only rational for negative values of $k$.
We will say that  the $L^{-k}$-discriminant $\Delta(L^{-k})$ is the \emph{k-th generalized matroid discriminant} of $L$. For $k = 1$, this is the matroid discriminant defined in \cite{MatsubaraHeoTelen2026}.

From now on, we will fix $L \subset \PP^n$ to be a linear space of dimension $d$, not contained in any coordinate hyperplane, and a full-rank matrix $A \in \KK^{(d+1)\times(n+1)}$ such that $L = \PP(\Row A)$. Let $a_0,\dots,a_n$ be the columns of $A$.
Recall that the matroid $M$ associated to $L$ is the matroid on the ground set $E \coloneq \{0,\dots,n\}$ whose independent sets index linearly independent columns of $A$. The matroid $M$ does not depend on the choice of the representative matrix $A$. We will freely use standard terminology for matroids and refer to \cite{oxley2006matroid} for the precise definitions. We write $\mathcal{F}(M)$ for the set of all flats of $M$ and $c(M)$ for the number of connected components of $M$. Furthermore $\mu(M)$, $\mu^+(M)\coloneq |\mu(M)|$ and $\beta(M)$ denote the signed and unsigned Möbius invariant and the beta invariant of $M$ respectively.

In \cite[Proposition 5]{Proudfoot2006}, it was shown that reciprocal linear spaces admit a stratification into pieces isomorphic to (open parts of) smaller reciprocal linear spaces, which can be described in terms of the matroid $M$ associated to $L$. Namely, for $F \subset E$, let
\[
T_F \coloneq \set{x \in \PP^n | x_i = 0 \text{ iff } i \notin F}, \qquad \Lambda_F \coloneq \overline{T_F} = \set{x \in \PP^n | x_i = 0 \text{ if } i \notin F}.
\]
Then, Proudfoot and Speyer showed
\[
L^{-1} = \bigsqcup_{F \in \mathcal{F}(L)} L^{-1} \cap T_F \qquad  \text{and} \qquad L^{-1} \cap \Lambda_F = \overline{L^{-1} \cap T_F} \cong L_F^{-1},
\]
where $L_F$ is the projection of $L$ onto the coordinate subspace $\KK^F \subseteq \KK^E$ indexed by $F$. In fact, as shown in \cite[Proposition 3.10]{MatsubaraHeoTelen2026}, this decomposition holds for arbitrary negative powers of linear spaces, $k \geq 1$:
\[
L^{-k} = \bigsqcup_{F \in \mathcal{F}(L)} L^{-k} \cap T_F \qquad  \text{and} \qquad L^{-k} \cap \Lambda_F = \overline{L^{-k} \cap T_F} \cong L_F^{-k}.
\]
By slight abuse of notation, we will thus write $L^{-k}_F=\overline{L^{-k}\cap T_F}$. 
We can mirror the whole construction of the principal $A$-determinant in this setting. Recall that $A$ is a matrix defining

\begin{definition}\label{def:matroid_det}
Consider again the map $i\colon \KK^{n+1} \to \KK^{(d+1)\times(n+1)}$, $z \mapsto  A\cdot\text{diag}(z)$. For $k \geq 2$, the \emph{$k$-th generalized matroid determinant} is 
\[
    E_{L,-k} \coloneq i^*\Res(L^{-k}) = i^*r^*\Chow(L^{-k}).
\]
\end{definition}
\begin{remark}
    The behavior of $i^*r^*\Chow(L^{k})$ for $k = -1$ or $k \geq 1$ is very different to that of the principal matroid determinant $E_{L,-k}$, so we decided \emph{not} to include those cases in the definition of the generalized matroid determinant, although the definition still makes sense. We include some results and open questions for this range of $k$ in \Cref{sec:otherMulties}.%: \Cref{lem: k=-1} describes the zero locus of $i^*r^*\Chow(L^{k})$ for $k=-1$, and \Cref{conj: positivek} predicts the zero locus of $i^*r^*\Chow(L^{k})$ for $k \geq 1$.
\end{remark}
Since $E_{L,-k}(z)$ is clearly homogeneous in $z$, it makes sense to consider its zero locus in projective space.
For $k=2$, we recover the principal matroid determinant $E_L$ introduced in \cite{MatsubaraHeoTelen2026}.  For $\KK=\CC$, Matsubara-Heo and Telen show that the zero locus of $E_L$ consists of all the points $z \in (\PP^n)^*$ such that the signed topological Euler characteristic of the hyperplane section $(L^{-1} \cap T_E)\cap H_z$ is less than its generic value \cite[Theorem~6.1]{MatsubaraHeoTelen2026}.

As shown in this article, the prime factorization of $E_{L,-k}$ is similar to the one of $E_A$ described in \Cref{th: ppalAdet}. The role of $P_A$ and its faces is now played by the matroid $M = \Matroid(L)$ and its flats. The following result describes this factorization up to determining the multiplicities. 

\begin{theorem}\label{th: ppalmatdet}
Let $k \geq 2$.
The $k$-th generalized matroid determinant has degree 
$
\deg(E_{L,-k}) =
%\begin{cases}
    (d+1)k^{d-c(M)+1}\mu^+(M).
    %& \text{ if } k \leq -1 \\
%    (d+1)2^{d-c(M)+1} & \text{ if } k \geq 1.
%\end{cases}
$
It factors into irreducible factors as
\[
E_{L,-k} =
%\begin{cases}
    \prod_{F\text{ conn.\ flat of }M} \Delta(L_F^{-k+1})^{m_F}%& \text{ if } k \leq -2 \\
    %\Delta(L^{k+1}) & \text{ if } k \geq 1.
%\end{cases}
\]
for some integers $m_F \geq 1$.
\end{theorem}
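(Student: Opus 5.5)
Two claims need proof: the degree formula, and the set-theoretic factorization with positive exponents.

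For the degree, I would use that $r^*$ of a degree-$D$ Chow form is multihomogeneous of degree $D$ in each of the $d+1$ rows of Stiefel coordinates. So $\Res(L^{-k})$ has total degree $(d+1)\deg(L^{-k})$. The map $i$ is linear, $z\mapsto A\operatorname{diag}(z)$, so it scales every row by the same $z_j$'s. Hence $E_{L,-k}$ is homogeneous of degree $(d+1)\deg(L^{-k})$, provided it is not identically zero. Nonvanishing means that for generic $z$ the linear space $\Ker(A\operatorname{diag}(z))$ misses $L^{-k}$. I would check this directly: for $z$ in the torus, $\Ker(A\operatorname{diag}(z)) = \operatorname{diag}(z)^{-1}L^\perp$. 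Meeting $L^{-k}\cap T_E$ in a point $y=x^{-k}$ with $x\in L$ forces $x\cdot(z\,x^{-k})=0$ for all of $L$. Generically this has no solution, by a dimension count on the incidence variety. Points on boundary strata are handled the same way, one flat at a time, using $L^{-k}\cap\Lambda_F\cong L_F^{-k}$.

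It remains to compute $\deg L^{-k}$. I would show $\phi_{-k}\colon L\dashrightarrow L^{-k}$ is birational (it is injective on $L\cap T_E$ up to scaling, as for the reciprocal plane). Then $\deg L^{-k}$ is the number of points of $L\cap T_E$ mapping to a generic codimension-$d$ linear section. This is the count of solutions of $d$ generic equations $\sum_j c_j x_j^{-k}=0$ on the torus of $L$. I expect it to equal $k^{d-c(M)+1}\mu^+(M)$: the reciprocal case $k=1$ gives $\mu^+(M)$ (Proudfoot–Speyer), and each equation contributes a factor $k$, except that $c(M)-1$ independent torus rescalings on the connected components are absorbed. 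I would cite this from \cite{briand2025}, or prove it via a Bernstein/Huh-type Euler characteristic count. This is one genuine step.

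For the factorization, I would use the geometric description. $E_{L,-k}(z)=0$ iff there is $y\in L^{-k}$ with $A\operatorname{diag}(z)y=0$. By the stratification, choose the stratum $T_F$ containing $y$; then $y$ lies in $L_F^{-k}\cap T_F$. Write $y=x^{-k}$ with $x\in L_F\cap T_F$. The condition says $\sum_{j\in F} z_j x_j^{-k}a_j=0$, that is, $\sum_{j} z_j x_j^{-k} \ell(a_j)=0$ for all $\ell$.

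The key observation is this. Set $w=x^{-k+1}\in L_F^{-k+1}$. Then for every $u\in L_F$, with $u_j=\ell(a_j)$, we get $\sum z_j w_j\cdot(u_j/x_j)=0$. Now $\{u/x : u\in L_F\}$ is exactly the tangent space of $L_F^{-1}$ pulled back; more precisely, the tangent space to $L_F^{-k+1}$ at $w$ is $\{x^{-k}u : u\in L_F\}$, because the differential of $x\mapsto x^{-k+1}$ is $(-k+1)x^{-k}\,dx$. So the condition says exactly that the hyperplane $z$ is tangent to $L_F^{-k+1}$ at the smooth torus point $w$. Hence
\[
\VV(E_{L,-k}) = \bigcup_F (L_F^{-k+1})^\vee,
\]
with closures taken.

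Next I would identify which $F$ give hypersurfaces. If $F$ is disconnected, $L_F$ is a product, so $L_F^{-k+1}$ is a join-like product and its dual is defective, giving $\Delta=1$. This step I expect to be the main obstacle. For connected $F$, I would cite \cite{briand2025}, which gives $\Delta(L_F^{-k+1})\neq 1$ and distinct varieties for distinct $F$. Since $E_{L,-k}$ is nonzero, every irreducible factor appears with exponent $m_F\ge1$, which yields the stated product.
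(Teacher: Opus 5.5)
Your overall route is the paper's. You reduce the degree to $(d+1)\deg L^{-k}$ via the Chow form, describe $\VV(E_{L,-k})$ through the incidence variety stratified by flats, identify each stratum's image with $(L_F^{-k+1})^\vee$ by the tangency computation, and use \cite{briand2025} to decide which duals are hypersurfaces. For disconnected $F$ you argue via defectiveness plus purity of $\VV(E_{L,-k})$. The paper instead puts $A$ in block form and zeroes the coordinates of the other components, giving $\overline{\pi_2(Y_F)}\subseteq\overline{\pi_2(Y_{F'})}$ for a component $F'$. Either works, and defectiveness is easy because $L_F^{-k+1}$ is a join over the components. However, two steps, as you state them, are false.

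\textbf{Birationality of $\phi_{-k}$.} $\phi_{-k}\colon L\dashrightarrow L^{-k}$ is not birational when $c(M)>1$. If $L=L_1\oplus L_2$, then $(x_1,\zeta x_2)$ with $\zeta^k=1$ has the same image as $(x_1,x_2)$. In general the degree is $k^{c(M)-1}$ (\Cref{lem:degreeofkthpower}). Correspondingly, the number of torus points of $L$ solving $d$ generic equations $\sum_j c_jx_j^{-k}=0$ is $k^{d}\mu^+(M)$, not $k^{d-c(M)+1}\mu^+(M)$. To see this, substitute $y=x^{-1}$ and apply B\'ezout on $L^{-1}$ to the base-point-free system spanned by $y_0^k,\dots,y_n^k$. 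The correct value $\deg L^{-k}=k^{d-c(M)+1}\mu^+(M)$ comes from dividing this count by the generic fiber size $k^{c(M)-1}$. Your ``absorbed rescalings'' heuristic is exactly that division, so it contradicts the birationality you planned to prove. As written, the birationality step cannot be carried out, and the $c(M)$ term comes out right only through the citation.

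\textbf{Smoothness at torus points.} For $k\ge 3$ the torus points of $L_F^{-k+1}$ need not be smooth, as the paper stresses. For example, take the line $x_0+x_1+x_2=0$ and a primitive cube root of unity $\omega$. The distinct points $[1:\omega:\omega^2]$ and $[1:\omega^2:\omega]$ both map to $[1:1:1]$ under $x\mapsto x^{-3}$, giving a node of $L^{-3}$ inside the torus (relevant for $k=4$). At such a $w$ your condition only says that $z$ annihilates $x^{-k}L_F$. That is the tangent space of one branch, not the Zariski tangent space at $w$. So it does not put $z$ in $(L_F^{-k+1})^\vee$ by definition, and the inclusion $\VV(E_{L,-k})\subseteq\bigcup_F (L_F^{-k+1})^\vee$ is not yet justified.

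The fix is the paper's vector-bundle argument. The pairs $(x,z)$ with $z\perp x^{-k}L_F$ form a bundle of linear spaces of constant codimension $\rank_M(F)=\dim x^{-k}L_F$ over the irreducible set $\PP(L_F)\cap T_F$, so the total space is irreducible. Over the dense open set where $x^{-k+1}$ is a smooth point, it maps into the conormal variety. Hence its whole image lies in the closed set $(L_F^{-k+1})^\vee$.
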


For $k=1$, this is the statement of \cite[Theorem 1.2(1)+(2)]{MatsubaraHeoTelen2026} (minus the restriction to connected flats). Our proof is very similar, however, it is complicated by the difficulties that $\phi_k$ is not an isomorphism on the torus and by the fact that $L^{-k}$ can have singularities in the torus $T_E$.

\begin{proof}
The claim about the degree follows by exactly the same argument as \cite[Theorem 3.5]{MatsubaraHeoTelen2026}, simply replacing their second powers by $k$-th powers. As mentioned, the factorization follows closely the proof of \cite[Theorem 3.11]{MatsubaraHeoTelen2026}.
Consider the incidence variety
\begin{align*}
Y \coloneq \Set{(x,z)\in \PP^n\times (\PP^n)^* | A_z\cdot x=0\text{ and }x\in L^{-k}}.
\end{align*}
The proof of \cite[Lemma 3.7]{MatsubaraHeoTelen2026} does not require smoothness on the torus and therefore by exactly the same argument we obtain that the zero locus $E_{L,-k}$ is the image of $Y$ under the second projection $\pi_2\colon \PP^n\times(\PP^n)^*\to (\PP^n)^*$.
On the other hand $Y$ can be stratified according to the decomposition of $L^{-k}$ into flat strata:
\begin{align}
\label{eq: Ystrat}
Y=\bigsqcup_{F\in \mathcal{F}(M)}\underbrace{Y\cap(T_F\times (\PP^n)^*)}_{\eqcolon Y_F }
\end{align}
We proceed in three steps to complete the proof:
\begin{enumerate}
\item \label{1} For any flat $F$ we have $(L_F^{-k+1})^\vee\subseteq \overline{\pi_2(Y_F)}$.
\item \label{2} If $F$ is a connected flat of $M$ then equality holds in \ref{1} and $\overline{\pi_2(Y_F)}$ is a hypersurface.
\item \label{3} If $F$ is a disconnected flat and $F'$ is a component of $F$ then $\overline{\pi_2(Y_F)}\subseteq \overline{\pi_2(Y_F')}$.
\end{enumerate}
Once these three claims are proven the result follows: Since by the above $\pi_2(Y)$ is the vanishing locus of $E_{L,-k}$ while by \eqref{eq: Ystrat} we also have
\begin{align*}
\mathcal{V}(E_{L,-k})=\pi_2(Y)=\bigcup_{F\in \mathcal{F}(M)}\overline{\pi_2(Y_F)}\overset{\ref{3}}{=} \bigcup_{F\text{ conn.\ flat of }M}\overline{\pi_2(Y_F)}\overset{\ref{2}}{=}\bigcup_{F\text{ conn.\ flat of }M}(L_F^{-k+1})^\vee.
\end{align*}
This yields all the irreducible factors of $E_{L,-k}$, as the dual hypersurfaces $(L_F^{-k+1})^\vee = V(\Delta(L_F^{-{k+1}}))$ on the right hand side are irreducible. It remains to prove  \ref{1}-\ref{3}.
\begin{enumerate}[wide]
\item Write $\ell_0,\ldots,\ell_n$ for the linear forms on $\KK^{d+1}$ whose coefficients in the standard basis are the columns of the matrix $A$, that is, $\ell(x) = x\cdot A$. Then $L_F^{-k+1}$ is rationally parametrized by the map $\ell^{-k+1}_F \colon \PP^d \dashrightarrow \PP^n$ whose $i$-th coordinate is $\ell_i^{-k+1}$ if $i\in F$ and $0$ otherwise. Similarly, $L_F^{-k}$ is rationally parametrized by $\ell^{-k}_F$. Assume $t\in \PP^d$ is such that $\ell^{-k+1}_F$ is defined at $t$ and $\ell^{-k+1}_F(t)$ is a smooth point of $L_F^{-k+1}$. Notice that in particular $\phi_{-k+1}$ is unramified at $t$, so we can use the parametrization $\ell^{-k+1}_F$ to compute the tangent space to $L_F^{-k+1}$ at $\ell^{-k+1}_F(t)$. The proof of \cite[Lemma 3.6]{MatsubaraHeoTelen2026} then shows that the pair $(\ell^{-k}_F(t),z)$ belongs to $Y_F$ if and only if the hyperplane defined by $z$ contains the tangent space to $L^{-k+1}_F$ at $\ell^{-k+1}_F(t)$. This implies that $(L_F^{-k+1})^\vee\subseteq \overline{\pi_2(Y_F)}$. 
\item Notice that $A_z\cdot x=A_x\cdot z$ and hence the fiber in $Y$ over a fixed point $x\in L^{-k}$ is a linear space $\Lambda_x$. The dimension of $\Lambda_x$ is $n-\rank(A_x)$ and hence it only depends on which coordinates of $x$ vanish, since a non-zero rescaling of columns of $A$ does not alter the rank. More precisely we have $\rank(A_x)=\rank_M(F)$ whenever $x\in T_F$. In particular, $Y_F$ the total space is a vector bundle over $L^{-k}\cap T_F$ and therefore irreducible of dimension 
\[
\dim Y_F = \dim L_F^{-k+1}+\dim \Lambda_x =\rank_M(F)-1+n-\rank_M(F)=n-1.
\]
In particular $\overline{\pi_2(Y_F)}$ is an irreducible variety of dimension at most $n-1$. On the other hand, assuming $F$ is connected, we have that $(L_F^{-k+1})^\vee$ is a hypersurface by \cite[Theorem 4.8]{briand2025}. Hence equality must hold in \ref{1} in that case.
\item Suppose $F$ is disconnected and write $F=F'\sqcup G$ where $F'$ is a connected flat of $M$. By an invertible change of coordinates in $\operatorname{GL}_{d+1}(\KK)$ we can assume that the columns of $A$ whose index belongs to $F$ are of block shape, i.e.\ $A$ is of the form displayed below where the first columns belong to $F'$, the second set of columns belongs to $G$ and the last set is in $F^c$. This does not change $Y$.
\[
    A=
\begin{bNiceArray}{rrr|rrr|rrr}[margin=4pt]
\Block{2-3}{A_1} & & & \Block{2-3}{0} & & & \Block{2-3}{A_3} & & \\
& & & & & & & & \\
\hline
\Block{2-3}{0} & & & \Block{2-3}{A_2} & & & \Block{2-3}{A_4} & & \\
& & & & & & & & \\
\CodeAfter
  \UnderBrace{4-1}{4-3}{F'}[shorten]
  \UnderBrace{4-4}{4-6}{G}[shorten]
   \UnderBrace{4-7}{4-9}{F^c}[shorten]
\end{bNiceArray}\vspace{3ex}
\]
Now suppose $z\in \pi_2(Y_F)$, then there exists $x\in T_F\cap L^{-k+1}$ such that $A_z\cdot x=0$. But then setting the entries $x_i$ with $i\in G$ to zero we obtain a point $x'\in L^{-k+1}\cap T_{F'}$ which still satisfies $A_z\cdot x'=0$ due to the block shape of $A$. Hence $z\in \pi_2(Y_{F'})$ which finishes the proof. \qedhere
\end{enumerate}
\end{proof}

The description of $E_{L,-k}$ in \Cref{th: ppalmatdet} is incomplete without a closed formula for the exponents $m_F$. Our main result \Cref{thm:intro_main} provides such a formula; \Cref{sec:main proof} is devoted to the proof of this theorem.

\subsection{Local multiplicity}

We recall some properties of local multiplicity needed for the proof of \Cref{thm:intro_main}. For reference, see for example \cite[Chapter VII.10]{ZariskiSamuelVol2} or \cite[Chapter 11]{Swanson2006}. All rings appearing in this section are assumed to be Noetherian.

Let $(R,\mm_R)$ be a Noetherian local ring and $I$ an $\mm_R$-primary ideal. Under this assumption, the quotients $I^i/I^{i+1}$ are $R$-modules of finite length. The Hilbert-Samuel function of $(R,I)$ is the Hilbert function of the associated graded ring $\operatorname{gr}_I(R) \coloneq \bigoplus_{i\geq 0} I^i/I^{i+1}$
\[
\chi_R^I(i) \coloneq \operatorname{hf}_{\operatorname{gr}_I(R)}(i) \coloneq \len_R(I^i/I^{i+1}).
\]
This function agrees with the Hilbert polynomial $\operatorname{HP}_{I,R}(t) \in \QQ[t]$ for $i\gg 0$. If $\dim R = n$, then $\deg \operatorname{HP}_{I,R} = n-1$.

\begin{definition}[Local multiplicity]
The multiplicity of the $\mm_R$-primary ideal $I \subset R$, denoted by $e(I,R) \in \NN_{>0}$, is the normalized leading coefficient of the Hilbert polynomial
\[
\operatorname{HP}_{I,R}(t) = \frac{e(I,R)}{(n-1)!}t^{n-1} + O(t^{n-2}).
\]
Equivalently, writing the Hilbert series $\sum_{i\geq 0} \chi_R^I(i)t^i$ as $\frac{Q(t)}{(1-t)^n}$, then $e(I,R) = Q(1)$.

For any Noetherian ring $R$, $I\subseteq R$ a $\mathfrak{p}$-primary ideal, we set $e(I,R)\coloneq e(IR_\mathfrak{p},R_\mathfrak{p})$.

If $X$ is a variety, or more generally an algebraic $\KK$-scheme, and $Z \subseteq X$ a closed irreducible subset, then the \emph{local multiplicity} of $X$ along $Z$ is $\mult_Z(X) \coloneq e(\mm_{X,Z},\OO_{X,Z})$ where $\mm_{X,Z}$ denotes the maximal ideal of the local ring $\OO_{X,Z}$ associated to $Z$.
\end{definition}

The local multiplicity satisfies the following properties, which we will use later on.

\begin{proposition}
\label{prop: locmult}
\begin{enumerate}
\item \label{prop: locmult1}  Let $J \subset I \subset R$ be $\mm_R$-primary ideals, then $e(I,R) \leq e(J,R)$.
\item \label{prop: locmult2} Let $f: R \rightarrow R'$ be an étale morphism of local rings with the same residue field, and let $I \subset R$ be an $\mm_R$-primary ideal. Denote $I' \coloneq f(I)R' \subset R'$. Then $e(I,R) = e(I',R')$.
\item \label{prop: locmult3} Let $R_1,R_2$ be $\KK$-algebras, $\mm_i \subseteq R_i$ maximal ideals with $R_i/\mm_i=\KK$, and let $I_i \subseteq R_i$ be $\mm_i$-primary. Let $I=I_1\otimes R_2+R_1\otimes I_2 \subseteq R \coloneq R_1 \otimes R_2$, then $I$ is ($\mm_1\otimes R_2+R_1\otimes \mm_2$)-primary and $e(I,R) = e(I_1,R_1)e(I_2,R_2)$.
\end{enumerate}
\end{proposition}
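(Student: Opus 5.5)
All three items reduce to length computations, so the plan is to express $e(I,R)$ through lengths of finite-length modules and to check that these lengths behave as claimed. I will use the Samuel-function description
\[
e(I,R)=\lim_{t\to\infty}\frac{n!}{t^{n}}\,\len_R(R/I^{t}),\qquad n=\dim R.
\]
It is equivalent to the definition via $\operatorname{gr}_I(R)$, since $\len_R(R/I^t)=\sum_{i<t}\chi^I_R(i)$ and summing the Hilbert polynomial raises its degree by one. In the local case $\dim R$ does not depend on the choice of $\mm_R$-primary ideal $I$.

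For \ref{prop: locmult1}, the inclusion $J\subseteq I$ gives $J^t\subseteq I^t$. Hence $R/I^t$ is a quotient of $R/J^t$, and $\len_R(R/J^t)\ge\len_R(R/I^t)$ for every $t$. Both Samuel functions are polynomials of the same degree $n$ for $t\gg0$, so comparing leading coefficients gives $e(I,R)\le e(J,R)$.

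For \ref{prop: locmult2}, recall that an étale local homomorphism is flat with $\mm_RR'=\mm_{R'}$. By hypothesis it also induces an isomorphism of residue fields, so $\KK\otimes_R R'=R'/\mm_R R'$ is the residue field of $R'$.
\begin{itemize}
\item Flatness gives $\dim R'=\dim R$, and the ideal $I'$ is $\mm_{R'}$-primary.
\item For a finite-length $R$-module $N$, induction on a composition series together with exactness of $-\otimes_R R'$ gives $\len_{R'}(N\otimes_R R')=\len_R(N)$.
\item Flatness also gives $I^tR'\cong I^t\otimes_R R'$, so $(I')^t=I^tR'$ and $R'/(I')^t\cong (R/I^t)\otimes_R R'$.
\end{itemize}
The Samuel functions of $(I,R)$ and $(I',R')$ therefore coincide, and so do the multiplicities.

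For \ref{prop: locmult3}, first note that $\mm\coloneq\mm_1\otimes R_2+R_1\otimes\mm_2$ is maximal, because $R/\mm\cong\KK\otimes_\KK\KK=\KK$. If $\mm_i^{N_i}\subseteq I_i$, then $\mm^{N_1+N_2}\subseteq I$, so $I$ is $\mm$-primary. Since $R/I^t$ is supported only at $\mm$, localizing at $\mm$ does not change it. All residue fields involved are $\KK$, so lengths are $\KK$-dimensions.

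The key step is the isomorphism of graded $\KK$-algebras
\[
\operatorname{gr}_I(R)\cong\operatorname{gr}_{I_1}(R_1)\otimes_\KK\operatorname{gr}_{I_2}(R_2).
\]
This step needs care, because one must check that $I^t=\sum_{a+b=t}I_1^a\otimes I_2^b$ produces no unexpected intersections. To do this, work modulo $I_1^s\otimes R_2+R_1\otimes I_2^s$ for $s\ge t$. The quotient $R_1/I_1^s\otimes_\KK R_2/I_2^s$ is finite-dimensional. Over a field the filtrations $\{I_1^a/I_1^s\}_a$ and $\{I_2^b/I_2^s\}_b$ split, so one can choose $\KK$-bases adapted to both. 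In such a basis, $I^t$ modulo that ideal is spanned by products of basis vectors of bidegree $(a,b)$ with $a+b\ge t$. This yields the isomorphism in each degree.

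Given the isomorphism, the Hilbert series multiply:
\[
\frac{Q_1(t)}{(1-t)^{n_1}}\cdot\frac{Q_2(t)}{(1-t)^{n_2}}=\frac{Q_1(t)\,Q_2(t)}{(1-t)^{n_1+n_2}}.
\]
The pole order shows $\dim R_\mm=n_1+n_2$. Evaluating the numerator at $1$ gives $e(I,R)=Q_1(1)Q_2(1)=e(I_1,R_1)\,e(I_2,R_2)$.

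I expect the graded tensor decomposition in \ref{prop: locmult3} to be the main obstacle. The other items are formal consequences of monotonicity of lengths and of flatness.
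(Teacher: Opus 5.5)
Your proposal is correct. Items (i) and (iii) follow the paper's route, and item (ii) takes a different one.

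In (i) you compare $\len_R(R/I^t)$ with $\len_R(R/J^t)$, exactly as the paper does. In (iii) you use the same graded tensor decomposition and multiply the Hilbert series. You justify this more carefully than the paper, which only asserts $\assgr_I(R)\cong\assgr_{I_1}(R_1)\otimes\assgr_{I_2}(R_2)$ without checking primariness or the dimension count. One small fix: take $s\geq t+1$ rather than $s\geq t$. Then $I_1^s\otimes R_2+R_1\otimes I_2^s\subseteq I^{t+1}$, so the finite-dimensional quotient actually computes $I^t/I^{t+1}$.

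For (ii) the paper argues via completions. An étale local map with the same residue field induces $\widehat{R}\cong\widehat{R'}$, and Hilbert--Samuel multiplicity is invariant under completion; both facts are cited.

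You instead show directly that the Samuel functions $t\mapsto\len_R(R/I^t)$ and $t\mapsto\len_{R'}(R'/(I')^t)$ coincide, using only flatness and $\mm_RR'=\mm_{R'}$. This argument is self-contained, and it is more general than you state. Because $R'/\mm_RR'=R'/\mm_{R'}$ has length one over $R'$ whatever the residue field extension is, the hypothesis that the residue fields agree is never used. So you in fact prove $e(I,R)=e(IR',R')$ for every flat local homomorphism with $\mm_RR'=\mm_{R'}$.

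That class includes $R\to\widehat{R}$, so your argument also reproves the second fact the paper cites. The paper's version is shorter but rests on external results, and it genuinely needs the residue field hypothesis to get the isomorphism of completions.
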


\begin{proof}
\begin{enumerate}[wide]
\item Assume $e(I,R) > e(J,R)$, then $\chi_R^I(t) = \chi_R^J(t) + \frac{e(I,R) - e(J,R)}{(n-1)!}t^{n-1} + O(t^{n-2})$, contradicting the estimate
\[
\sum_{j=0}^i \chi_R^I(j) = \len_R(R/I^i) \leq \len_R(R/J^i) =\sum_{j=0}^i \chi_R^J(j).
\]
% \item For any $k \geq 0$, we have the isomorphism of $R$-modules
% \[
% I^k/I^{k+1} \cong (I^{k}/I^{k+2})/(I^{k+1}/I^{k+2})
% \]
% and therefore
%  \[
% \len_R I^k/I^{k+1} + \len_R I^{k+1}/I^{k+2} = \len_R I^{k}/I^{k+2}
% \]
% By using this identity recursively we obtain that for any $k'>k$
% \[
% \len_R (I^{k}/I^{k+k'}) = \len_R (I^k/I^{k+1}) + \cdots + \len_R (I^{k+k'-1}/I^{k+k'})
% \]
% For $k=0$ this yields
% \[
% \len_R (R/I^{k'}) = \len_R (R/I^{1}) + \cdots + \len_R (I^{k'-1}/I^{k'})
% \]
% Since $J \subset I$, $\len_R (R/I^{k'}) \leq \len_R (R/J^{k'})$ and so
% \[
% \len_R (R/I^{1}) + \cdots + \len_R (I^{k'-1}/I^{k'}) \leq \len_R (R/J^{1}) + \cdots + \len_R (J^{k'-1}/J^{k'})
% \]
% This holds for any $k' > 0$. Now,
% by definition of the Hilbert polynomial, we see that asymptotically, the left hand side of this equation grows like 
% \[
% C + \frac{e(I,R)}{(n-1)!}(k_0^{n-1}+\cdots+(k')^{n-1}) \sim
% \frac{e(I,R)}{(n-1)!}\frac{(k')^{n}}{n}
% \]
% for some constant $C > 0$ and some $k_0 \in \mathbb{N}$,
% and similarly the right hand grows asymptotically like 
% \[
% \frac{e(J,R)}{(n-1)!}\frac{(k')^{n}}{n}
% \]
% We conclude that $e(J,R) \geq e(I,R)$.
\item Since $R$ and $R'$ have the same residue fields, $f$ being étale implies that $f$ induces an isomorphism of the completions of $R$ and $R'$, see \cite[Proposition 4.3.26]{Liu2002}. The statement follows from the fact that Hilbert-Samuel multiplicity is preserved under completion \cite[Chapter VII, Lemma 8.1]{ZariskiSamuelVol2}.
\item With the given assumptions and notation, we have an isomorphism of associated graded algebras
\begin{align*}
    \assgr_I(R)=\assgr_{I_1\otimes R_2+R_1\otimes I_2}(R\otimes R')=\assgr_{I_1}(R)\otimes \assgr_{I_2}(R').
\end{align*}
Let $\mm = \mm_1\otimes R_2+R_1\otimes \mm_2$, then the Hilbert series of $I,I_1,I_2$ satisfy
\[
\operatorname{HS}_{I_\mm,R_\mm}=\operatorname{HS}_{(I_1)_{\mm_1},(R_1)_{\mm_1}} \cdot \operatorname{HS}_{(I_2)_{\mm_2},(R_2)_{\mm_2}}.
\]
From the description $e(I,R) = ((1-t)^{\dim R_\mm} \operatorname{HS}_{I_\mm,R_{\mm}}(t))(1)$, we see that the multiplicities multiply. \qedhere
\end{enumerate}
\end{proof}

The associated graded ring of the local ring $\mathcal{O}_{X,x}$ of a point of a scheme is the coordinate ring of the tangent cone $\operatorname{TC}_xX = \Spec \assgr_{\mm_{X,x}}(\OO_{X,x})$. In particular, the Hilbert function and -polynomial of $\mathcal{O}_{X,x}$ are that of the projectivized tangent cone $\PP(\operatorname{TC}_xX) \hookrightarrow \PP(\operatorname{T}_xX)$, so by definition $\mult_x X = \deg \PP(\operatorname{TC}_xX)$.

\begin{example}\label{ex: multinRLS}
Let $X = \widehat{L^{-1}} \subseteq \AA^E$ and let $x \in X$ be in the stratum $X \cap T_F$ corresponding to the flat $F$ (that is, $x_i \neq 0$ if and only if $i \in F$). In \cite[Theorem 24]{Sanyal2013}, the authors show that the tangent cone $\TC_0(X-x) \subseteq \TC_0 \AA^E = \AA^E$ has the explicit description
\[
\TC_0(X-x) = \widehat{L|_F} \times \widehat{(L/F)^{-1}} \subseteq \AA^F \times \AA^{E\setminus F}.
\]
In particular, since $\widehat{L|_F}$ is a linear space, $\PP(\TC_xX)$ is a cone over $(L/F)^{-1}$, hence
\[
\mult_x X = \deg \PP(\TC_xX) = \deg (L/F)^{-1} = \mu^+(M/F).\qedhere
\]
\end{example}

Geometrically the local multiplicity enjoys the following properties, which we will use throughout the article:

\begin{proposition}\label{prop: propertiesmult}
Let $X$ be an algebraic scheme, $Z \subseteq X$ an irreducible closed subset.
\begin{enumerate}
\item \label{prop: propertiesmult1} If $X$ is reduced and equidim.\ at $x\in X$, then $\mult_x X = 1$ if and only if $x \in X_\reg$.
\item \label{prop: propertiesmult2} If $X$ is equidimensional, then the map $Y\mapsto\mult_{Y}(X)$ is upper semi-continuous on $X$ (including its non-closed points). In particular,
\[
\mult_Z(X) = \min_{p \in Z} \mult_p X = \mult_x X \text{ for $x$ in a dense open $U \subseteq Z$}.
\]
\item \label{prop: propertiesmult3} If $Y$ is another scheme, $x \in X$ and $y\in Y$, then $\mult_{(x,y)} X\times Y = \mult_xX\cdot \mult_y Y$.
\item \label{prop: propertiesmult4} If $X \subseteq \PP^n$ is (quasi-)projective, then $\mult_Z X = \mult_{\widehat Z} \widehat{X}$.
\item \label{prop: propertiesmult5} Let $W \subseteq X$ be a subscheme locally around $Y$ defined by $f \in \OO_{X,Y}$, that is, $\mathcal{I}(W)\OO_{X,Y} = f\mathcal{O}_{X,Y}$. Assume that $\OO_{X,Y}$ is regular, then
\[
\mult_Y(W) = \ord_{\mm_{X,Y}}(f) \coloneq \max \set{ k \in \NN | f \in \mm_{X,Y}^k}.
\]
\end{enumerate}
\end{proposition}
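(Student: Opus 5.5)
The plan is to treat the five items separately, since each is a standard fact from commutative algebra. For most of them the work is reducing to the ring-theoretic statements of \Cref{prop: locmult} or to a classical theorem. Throughout, recall that $\KK$ is algebraically closed and all schemes are of finite type, hence excellent. I expect \ref{prop: propertiesmult1} and the equality part of \ref{prop: propertiesmult2} to be the only non-formal steps.

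For \ref{prop: propertiesmult1}, the easy direction goes as follows. If $\OO_{X,x}$ is regular of dimension $n$, then $\assgr_{\mm}(\OO_{X,x})$ is a polynomial ring in $n$ variables, so its Hilbert series is $1/(1-t)^n$ and the multiplicity is $1$. For the converse I would invoke Nagata's theorem: a local ring whose completion is unmixed and which has multiplicity $1$ is regular. The key point is to check the hypothesis. Since $X$ is of finite type over $\KK$, the ring $\OO_{X,x}$ is excellent, so the completion of a reduced equidimensional local ring is again reduced and equidimensional. That gives unmixedness of the completion, and this verification is the main subtlety of the item.

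For \ref{prop: propertiesmult2}, I would cite Bennett's theorem (in the form of Dade, or Lipman's generalization) for upper semicontinuity of $Y \mapsto \mult_Y X$ on an excellent equidimensional scheme. The displayed formula then follows in two steps. First, let $\eta$ be the generic point of $Z$. Upper semicontinuity says $\{p : \mult_p X \le \mult_\eta X\}$ is open, and it contains $\eta$, hence it contains a dense open $U \subseteq Z$. Second, every specialization $p \in Z$ of $\eta$ satisfies $\mult_p X \ge \mult_\eta X$. This is Nagata's inequality $e(\OO_{X,p}) \ge e(\OO_{X,p,\mathfrak p_Z})$, which is where equidimensionality is needed. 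Together these give $\mult_Z X = \min_{p\in Z}\mult_p X$, with the minimum attained on $U$.

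For \ref{prop: propertiesmult3}, the approach is direct. At closed points the residue fields are $\KK$, and $\OO_{X\times Y,(x,y)}$ is the localization of $\OO_{X,x}\otimes_\KK\OO_{Y,y}$ at $\mm_x\otimes \OO_{Y,y}+\OO_{X,x}\otimes\mm_y$. Applying \Cref{prop: locmult}\ref{prop: locmult3} with $I_i=\mm_i$ gives the product formula. If non-closed points are needed, I would reduce to closed points via \ref{prop: propertiesmult2}, taking generic closed points of the relevant closures.

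For \ref{prop: propertiesmult4}, I would use that $\widehat X\setminus\{0\}$ is a $\KK^\times$-bundle over $X$. It is Zariski-locally isomorphic to $X\cap D_+(x_i)\times \mathbb{G}_m$. By \ref{prop: propertiesmult3}, every closed point $\hat p$ over $p$ has $\mult_{\hat p}\widehat X=\mult_pX\cdot 1$. Taking minima over $\widehat Z$ and $Z$ with \ref{prop: propertiesmult2}, using that $\widehat Z\setminus\{0\}$ surjects onto $Z$, yields $\mult_{\widehat Z}\widehat X=\mult_ZX$.

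For \ref{prop: propertiesmult5}, set $R=\OO_{X,Y}$, which is regular, so $\assgr_\mm R$ is a polynomial ring and in particular a domain. Let $r=\ord_\mm(f)$ and let $f^*$ be the initial form of $f$ in degree $r$. Because $\assgr_\mm R$ is a domain, the initial ideal of $fR$ is $(f^*)$. Hence $\assgr_\mm(R/fR)\cong \assgr_\mm(R)/(f^*)$. This quotient has Hilbert series $(1-t^r)/(1-t)^{n}$, and its multiplicity is the value at $1$ of $(1-t^r)/(1-t)=1+t+\dots+t^{r-1}$, which is $r$.
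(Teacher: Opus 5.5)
Your proposal is correct and follows the paper's proof essentially item by item. You use Nagata's criterion for (i), with the unmixedness hypothesis supplied by excellence; Dade's semicontinuity for (ii); \Cref{prop: locmult}(iii) for (iii); and the local $\KK^\times$-bundle structure of $\widehat X$ for (iv). For (v) you prove the formula directly via the initial form of $f$ in the polynomial ring $\assgr_{\mm}R$, whereas the paper simply cites Swanson--Huneke.

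The only caveat is that routing (iv), and the non-closed-point case of (iii), through (ii) tacitly assumes $X$ is equidimensional. This is harmless for the paper's application to the irreducible $L^{-k}$. It can also be avoided: the local ring of $\widehat X$ at the generic point of $\widehat Z$ is a flat local extension of $\OO_{X,Z}$ in which $\mm_{X,Z}$ generates the maximal ideal, so the two Hilbert--Samuel functions coincide.
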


\begin{proof}
\begin{enumerate}[wide]
\item This is evident from the previous tangent cone description. The general statement is due to Nagata \cite[Theorem 40.6]{Nagata1962}: A Noetherian local ring is regular if and only if $e(R)=1$ and $A$ is quasi-unmixed ($\dim \widehat{R}/\mathfrak{p} = \dim \widehat{R}$ for all associated primes of $0_{\widehat{R}}$). The latter condition is always satisfied for local rings of varieties, so $\mult_x(X) = 1$ if and only if $\OO_{X,x}$ is regular.
\item The semi-continuity of local multiplicity is attributed to Dade \cite{dade1960multiplicity}. A precise statement can be found in \cite[Theorem 6.12]{VillamayorU2014}, where the result is discussed in the generality of excellent schemes.
\item This is a geometric reformulation of \Cref{prop: locmult}\ref{prop: locmult3}.
\item Locally, $\widehat{X}$ is isomorphic to an open set of $X \times \AA^1$, more precisely, $\widehat{X}$ is the open subset of the total space of $\mathcal{O}_X(-1)$ given by the complement of the zero section. Hence we can (locally) apply (iii), using that $\AA^1$ is regular.
\item This is \cite[Example 11.2.8]{Swanson2006}.
\qedhere
\end{enumerate}
\end{proof}

\begin{example}
\label{ex:multiesgeovsalg}
Let $W = V(f) \subseteq \AA^n$ be a hypersurface, and consider the irredundant factorization $f = f_1^{e_1}\dotsm f_m^{e_m}$ with $f_i$ irreducible. Let $Y = V(f_i)$, then $f\mathcal{O}_{\AA^n,Y} = f_i^{e_i} \OO_{\AA^n,Y} = \mm_{\AA^n,Y}^{e_i}$, so $\mult_Y(W) = e_i$. In this way, the local multiplicity recovers the standard notion of algebraic multiplicity of a factor in a polynomial.
\end{example}

We end this section with a theorem which will later be useful.

\begin{theorem}[{\cite[Ch. VIII, Section 10, Theorem 24, Corollary 1]{ZariskiSamuelVol2}}] \label{thm: ZariskiSamuel}
Let $R$ be a local ring, with maximal ideal $\mm_R$ and consider an inclusion of rings $R \hookrightarrow S$ which makes the semi-local ring $S$ a finite $R$-module. Let $J$ be an ideal of $R$ which is $\mm_R$-primary. Let $\mathfrak{m}_{S,1},\dots,\mathfrak{m}_{S,r}$ be the maximal ideals of $S$ and let $I_i$ be the $\mathfrak{m}_{S,i}$-primary component of $JS$. Suppose no nonzero element of $R$ is a zero divisor in $S$ and for all $i=1,\ldots,s$, the local ring $S_{\mm_{S,i}}$ has the same dimension as $R$. 
Then,
\[
[S:R]e(J, R) = \sum_{i=1}^r [S/\mm_{S,i}:R/\mm_R] e(I_i, S).
\]
\end{theorem}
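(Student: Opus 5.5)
The plan is to compare two computations of the asymptotic growth of $\len_R(S/J^tS)$ as $t\to\infty$: first viewing $S$ as a finite $R$-module, then decomposing $S/J^tS$ over the maximal ideals of $S$. Throughout, $n\coloneq\dim R$, and I use the Hilbert–Samuel multiplicity of a finite $R$-module $N$ with respect to $J$:
\[
e(J,N) \coloneq \lim_{t\to\infty}\frac{n!}{t^{n}}\,\len_R(N/J^tN).
\]
This is defined because $N/J^tN$ has finite length and its length is a polynomial of degree at most $n$ for $t\gg0$. For $N=R$ it agrees with $e(J,R)$ from the definition above; that definition uses $\len(J^i/J^{i+1})$, and summing these lengths raises the degree by one.

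\textbf{Step 1: module side.} I would show $e(J,S)=[S:R]\,e(J,R)$. Here $[S:R]$ is the rank of $S$ over the total ring of fractions $Q(R)$. This rank is well defined because no nonzero element of $R$ is a zero divisor in $S$, so $S\otimes_R Q(R)$ is a free $Q(R)$-module of some rank $r$ (for instance when $R$ is a domain, which is the case used in the paper).

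Choose $r$ elements of $S$ forming a $Q(R)$-basis of $S\otimes_R Q(R)$. This gives an exact sequence $0\to R^r\to S\to C\to 0$, where $C$ is a torsion module annihilated by some nonzerodivisor of $R$, so $\dim C<n$. The additivity of $e(J,-)$ on short exact sequences then gives $e(J,S)=r\,e(J,R)+e(J,C)$. Since $\dim C<n$, the term $e(J,C)$ vanishes in degree $n$.

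Additivity itself comes from Artin–Rees. For $0\to N'\to N\to N''\to 0$, one has $\len(N/J^tN)=\len(N''/J^tN'')+\len(N'/(N'\cap J^tN))$, and the induced filtration on $N'$ is $J$-good, so its leading coefficient agrees with that of $N'/J^tN'$.

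\textbf{Step 2: local side.} The ring $S/J^tS$ is Artinian, since $JS$ is primary to the Jacobson radical of the semi-local ring $S$ (because $S$ is finite over $R$). Hence
\[
S/J^tS\cong\prod_i S_{\mm_{S,i}}/J^tS_{\mm_{S,i}}.
\]
For any finite-length $S_{\mm_{S,i}}$-module $P$ one has $\len_R P=[S/\mm_{S,i}:R/\mm_R]\,\len_{S_{\mm_{S,i}}}P$. This follows by filtering $P$ with successive quotients $S/\mm_{S,i}$, each of which has $R$-length equal to its $R/\mm_R$-dimension.

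Moreover $JS_{\mm_{S,i}}=I_iS_{\mm_{S,i}}$, so the Hilbert–Samuel polynomial of $S_{\mm_{S,i}}$ with respect to $J$ is the one defining $e(I_i,S)$. Because $\dim S_{\mm_{S,i}}=n$ for every $i$, each of these polynomials has degree exactly $n$. Comparing the coefficients of $t^n/n!$ in
\[
\len_R(S/J^tS)=\sum_i[S/\mm_{S,i}:R/\mm_R]\,\len(S_{\mm_{S,i}}/J^tS_{\mm_{S,i}})
\]
with the result of Step 1 then gives the claimed formula.

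\textbf{Main obstacle.} I expect the delicate point to be Step 1: making the rank $[S:R]$ meaningful and verifying that the torsion cokernel $C$ contributes nothing in degree $n$. This is exactly where the hypothesis that nonzero elements of $R$ are nonzerodivisors on $S$ is used. The equidimensionality hypothesis enters only in Step 2, to ensure that every local factor contributes in the top degree $n$ rather than in some lower degree.
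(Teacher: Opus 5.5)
The paper does not prove this statement; it only cites Zariski--Samuel. Your argument is a correct, self-contained version of the standard proof. You first establish the rank formula $e(J,S)=[S:R]\,e(J,R)$ for $S$ viewed as a finite $R$-module, using Artin--Rees additivity and the fact that the torsion cokernel has dimension $<n$. You then decompose the Artinian ring $S/J^tS$ into its localizations at the $\mm_{S,i}$ and apply the length conversion $\len_R P=[S/\mm_{S,i}:R/\mm_R]\,\len_{S_{\mm_{S,i}}}P$.

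Two small corrections to your commentary, neither of which affects validity.

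First, the hypothesis that no nonzero element of $R$ is a zero divisor in $S$ already forces $R$ to be a domain. If $ab=0$ with $a,b\in R\setminus\{0\}$, then $a$ kills $b\neq 0$ in $S$. So $Q(R)$ is a field, and your hedge about freeness of $S\otimes_R Q(R)$ is unnecessary.

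Second, your ``main obstacle'' paragraph says Step 1 is where this hypothesis is used, but Step 1 never uses that $S$ is $R$-torsion-free. Injectivity of $R^r\to S$ and torsionness of $C$ hold for any finite module over a local domain. In your route the hypothesis serves only to make $R$ a domain, so that $[S:R]$ is meaningful. The equidimensionality assumption is the one doing real work, in Step 2.
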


\section{Proof of the main theorem}\label{sec:main proof}

In this section we give the proof of \Cref{thm:intro_main}. We start by illustrating this Theorem in two examples.

\begin{remark}\label{rem: simplifications}
By \cite[Theorem 4.8]{briand2025}, for $k \geq 2$, $\Delta(L_F^{-k+1})$ is dual-defective if and only if $F$ is not a connected flat, in which case it does not contribute to the factorization of $E_{L,-k}$. Therefore, the product in the factorization of $E_{L,-k}$ in \Cref{thm:intro_main} may be taken either  over connected flats only or over all flats where the disconnected flats simply contribute a factor of 1.
In particular, the component $\Delta(L^{-1})$ appears non-trivially in the factorization if and only if $\Matroid(L)$ is connected. It corresponds to the flat $F = E$, and so it appears with multiplicity
\[
m_E = k^{\rank(M/E)}\mu^+(M/E) = 1
\]
as $M/E$ is the empty matroid.
\end{remark}

\begin{example}
    Let $L = \PP(\operatorname{Row} A) \subset \PP^4$, where
    \[
    A = \begin{pmatrix}
        1 & 0 & -1 & 0 & 0 \\
        -1 & 1 & 0 & 0 & 1 \\
        0 & 0 & 0 & -1 & -1
    \end{pmatrix}.
    \]
    This matrix has rank $d+1$, its matroid $M=M(L)$ is the graphic matroid associated with the following graph.
    \begin{center}
\begin{tikzpicture}[scale=1.5]
    \coordinate (A) at (0,0);
    \coordinate (B) at (2,0);
    \coordinate (C) at (2,2);
    \coordinate (D) at (0,2);

    \draw (A) -- node[above] {$0$} (B)
          -- node[left] {$2$} (C)
          -- node[above] {$3$} (D)
          -- node[left] {$4$} (A);

    \draw (A) -- node[left] {$1$} (C);

    \fill (A) circle (1.5pt)
          (B) circle (1.5pt)
          (C) circle (1.5pt)
          (D) circle (1.5pt);
\end{tikzpicture}
\end{center}
By \Cref{rem: simplifications}, since $M$ is connected, we already know that $m_E = 1$.
Furthermore, the connected flats of $M$ are the singletons, the whole ground set, and the rank 2 flats of size 3 ($\{0,1,2\}$ and $\{1,3,4\}$). We compute the multiplicities of each of these flats. For concreteness, we let $k=2$.
\[
\begin{array}{cccccc}
\toprule  
\text{connected flat } F
&
\rank(F) & \rank(M/F) & \mu^+(M/F) & m_F & 2^{\rank F-1}\beta(M|_F)
\\
\midrule
\{0\},\{2\},\{3\},\{4\} & 1 & 2 & 2 & 8 & 1 \\
\{1\} & 1 & 2 & 1 & 4 & 1 \\
\{0,1,2\}, \{1,3,4\} & 2 & 1 & 1 & 2 & 2\\
\{0,1,2,3,4\} & 3 & 0 & 1 & 1 & 4\\
\bottomrule
\end{array}   
\]
Therefore $E_{L,-2}$ factors as:
\[
E_{L,-2}(z) = \Delta(L^{-1}) \,\Delta(L_{\{0,1,2\}}^{-1})^2 \,\Delta(L_{\{1,3,4\}}^{-1})^2  \, z_0^8 \, z_1^4 \, z_2^8 \, z_3^8 \, z_4^8 \in \KK[z].
\]
This matches degree-wise: $E_{L,-2}$ has degree
\[
(d+1)2^d\mu^+(M)=48 = 4 + 2\cdot 2 + 2\cdot 2 + 8 + 4 + 8 + 8  + 8. \qedhere
\]
\end{example}

\begin{example}[Uniform linear spaces] Let $L \subset \PP^n$ be a generic linear space of dimension $d$. The matroid associated to $L$ is the uniform matroid of rank $d+1$ on $n+1$ elements $U_{d+1,n+1}$.
The only connected flats of $U_{d+1,n+1}$ are the entire ground set $E$ and the singletons. Therefore,
\[
E_{L,-k} = \Delta(L^{-k+1})^{m_E}\prod_{i \in [n]} \Delta(L_{\{i\}}^{-k+1})^{m_i}.
\]
Now, $L_{\{i\}}^{-k+1} = L_{\{i\}}$ is the $i$-th coordinate point. Its dual variety is the set of hyperplanes containing that point, so $\Delta(L_{\{i\}})=z_i \in \KK[z]$.
By \Cref{thm:intro_main}, 
\[
m_i = k^{\rank(M/\{i\})-c(M)+1} \mu^+(M/\{i\}) = k^{d} \mu^+(U_{d+1,n}) = k^{d}\binom{n-1}{d-1}.
\]
As above by \Cref{rem: simplifications} we have $m_E=1$. We obtain in this way the factorization of $E_{L,-k}$:
\[
E_{L,-k} = (z_0z_1\dotsm z_{n-1}z_n)^{k^{d}{\binom{n-1}{d-1}}}\Delta(L^{-k}).
\]
The reader may check that using the following formulas (see \cite[Example 4.2]{briand2025}, and \Cref{th: ppalmatdet}),
\[
\deg(\Delta(L^{-k})) = k^d{\binom{n-1}{d}} \text{ and } \deg(E_{L,-k}) = (d+1)k^{d-c(M)+1}\binom{n}{d},
\]
the following equality holds:
\[
\deg(E_{L,-k}) = \deg(\Delta(L^{-k})) + (n+1)k^{d}{\binom{n-1}{d}}. \qedhere
\]
\end{example}

For the rest of this section we will fix $L \subset \PP^n$ a linear space of dimension $d$ not contained in any coordinate hyperplane, and $k \in \ZZ_{\geq 2}$.

\subsection{A lower bound}\label{sec:lower bound}
\label{sec:lower_bound}
In this section we show that the local multiplicities of $L^{-k}$ along its flat strata are lower bounds on the exponents $m_F$ appearing in the factorization of $E_{L,-k}$. To do so, we pass through the Chow incidence correspondence. Since the methods in this section work in greater generality than just for powers of reciprocal linear spaces, we fix an irreducible, equidimensional projective variety $X\subseteq \PP^n$ of dimension $d$ and an irreducible closed subset $Y\subseteq X$.

We start by fixing some notation.
Let $\mathcal{I}$ be the Chow incidence correspondence
\[
\mathcal{I} \coloneq \Set{(x,[\Lambda]) | x \in X \cap \Lambda } \subseteq X \times \mathbb{G}(n-d-1,\PP^n).
\]
The projection $\pi_1 \colon \mathcal{I} \twoheadrightarrow X$ is surjective while the image of $\pi_2 \colon  \mathcal{I} \rightarrow \mathbb{G}(n-d-1,\PP^n)$ is the Chow variety $\mathcal{C}(X)$. Let $\mathcal{C}_X(Y)$ be the image of $\mathcal{I} \cap (Y \times \mathbb{G}(n-d-1,\PP^n))$ under $\pi_2$, set-theoretically
\[
\mathcal{C}_X(Y) = \Set{ [\Lambda] | Y \cap \Lambda \neq \emptyset } \subseteq \mathbb{G}(n-d-1,\PP^n).
\]
Note that $\mathcal{C}_X(Y) \not\cong \mathcal{C}(Y)$ (they don't have the same dimension), though the latter is a Grassmannian bundle over the former.

\begin{lemma}\label{lem: chowMult}
With the above notations, $\mult_Y(X) \leq \mult_{\mathcal{C}_X(Y)}\mathcal{C}(X)$.
\end{lemma}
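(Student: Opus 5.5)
Our plan is to pass through the incidence variety $\mathcal{I}$ in two steps: first compare $X$ with $\mathcal{I}$ along $\pi_1$, then compare $\mathcal{I}$ with $\mathcal{C}(X)$ along $\pi_2$. The aim is to prove
\[
\mult_Y(X)=\mult_{\mathcal{I}_Y}(\mathcal{I})\leq \mult_{\mathcal{C}_X(Y)}\mathcal{C}(X),
\]
where $\mathcal{I}_Y\coloneq \pi_1^{-1}(Y)$. This set is irreducible, since $\pi_1$ is a Grassmannian bundle; more precisely, it is the bundle of $(n-d-1)$-planes through the given point. Hence $\pi_1\colon\mathcal{I}\to X$ is Zariski-locally trivial with smooth fiber $G$, and \Cref{prop: propertiesmult}\ref{prop: propertiesmult3}, applied to a generic point of $Y\times G$, gives $\mult_{\mathcal{I}_Y}\mathcal{I}=\mult_Y X$. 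We also note that $\pi_2(\mathcal{I}_Y)=\mathcal{C}_X(Y)$, and that $\mathcal{I}_Y$ is the unique component of $\pi_2^{-1}(\mathcal{C}_X(Y))$ dominating $\mathcal{C}_X(Y)$. This holds at least when $\dim\mathcal{C}_X(Y)=\dim\mathcal{I}_Y$, i.e.\ when a generic $\Lambda$ meeting $Y$ meets it in finitely many points. A dimension count makes this true for $\dim Y<d$, since then $\Lambda$ meets $Y$ in one point generically. The case $Y=X$ is trivial, because both sides equal $1$ there (taking reduced structure).

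For the second step we compare local rings along $\pi_2$. The map $\pi_2\colon\mathcal{I}\to\mathcal{C}(X)$ is birational and generically finite, so we want to apply \Cref{thm: ZariskiSamuel}. Set $R=\OO_{\mathcal{C}(X),\mathcal{C}_X(Y)}$, and let $S$ be the semi-local ring of $\mathcal{I}$ along the fiber over the generic point of $\mathcal{C}_X(Y)$. To make $S$ finite over $R$, we restrict to an open set of $\mathcal{C}(X)$ over which $\pi_2$ is finite. This is possible because a generic $\Lambda\in\mathcal{C}_X(Y)$ meets $X$ in finitely many points: its intersection with $X$ has expected dimension $0$. Since $\pi_2$ is birational, $[S:R]=1$. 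Take $J=\mm_R$. The primary component of $\mm_RS$ at the maximal ideal corresponding to $\mathcal{I}_Y$ is contained in $\mm_S$. Dropping the other summands of \Cref{thm: ZariskiSamuel}, whose residue-degree factors are $\geq 1$, and then using \Cref{prop: locmult}\ref{prop: locmult1}, we get
\[
e(\mm_R,R)\;\geq\; e(I_1,S_{\mm_1})\;\geq\; e(\mm_{S_1},S_1)=\mult_{\mathcal{I}_Y}\mathcal{I}.
\]
Combined with the first step, this gives the claim.

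The main obstacle is verifying the hypotheses of \Cref{thm: ZariskiSamuel}. The first is that $S$ is a finite, torsion-free $R$-module. Finiteness needs the quasi-finiteness of $\pi_2$ over the generic point of $\mathcal{C}_X(Y)$ discussed above; properness of $\pi_2$ then gives finiteness after localizing. Torsion-freeness holds because $\mathcal{I}$ is irreducible and reduced and dominates $\mathcal{C}(X)$. The second is equality of dimensions: every local ring $S_{\mm_i}$ must have dimension $\dim R=\codim\mathcal{C}_X(Y)$. This requires each fiber point to lie on a subvariety of the same codimension in $\mathcal{I}$, which again follows from the equal-dimension count. I would first try to handle these by shrinking to the open subset of $\mathcal{C}(X)$ where the fiber is finite. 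If genericity fails, I would instead work only at $\mathcal{I}_Y$, treating $\pi_2$ as a finite birational map locally and using the birational comparison.
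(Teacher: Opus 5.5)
Your proposal is correct and follows the paper's route. You get equality of multiplicities along the locally trivial Grassmannian bundle $\pi_1$, then apply \Cref{thm: ZariskiSamuel} to the birational map $\pi_2$ with $[S:R]=1$, and finish with $\mm_R S\subseteq \mm_S$ and \Cref{prop: locmult}\ref{prop: locmult1}. The differences are cosmetic: you localize at the generic points of $Y$, $\mathcal{I}_Y$ and $\mathcal{C}_X(Y)$ rather than at a general closed point $(p,\Lambda_p)$ plus semicontinuity, and you check the finiteness, dimension and extra-summand hypotheses that the paper passes over. One small fix: justify finiteness of $\Lambda\cap X$ for general $\Lambda\in\mathcal{C}_X(Y)$ not by an expected-dimension count but by projecting from $y\in Y$, which gives $\Lambda\cap X=\{y\}$ for general $\Lambda\ni y$.
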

\begin{proof}
Let $\Lambda_p \in \mathcal{C}_X(Y)$ be general in the sense of \Cref{prop: propertiesmult}\ref{prop: propertiesmult2}, i.e. 
\[
\mult_{\Lambda_p}\mathcal{C}(X) = \mult_{\mathcal{C}(Y)}\mathcal{C}(X).
\]
Let $p \in \Lambda_p \cap Y$ be any point. We claim that
\[\mult_Y(X)\overset{\text{(a)}}{\leq} 
\mult_p(X) \overset{\text{(b)}}{=} \mult_{(p,\Lambda_p)}(\mathcal{I}) \overset{\text{(c)}}{\leq} \mult_{\Lambda_p}(\mathcal{C}(X)).
\]
\begin{enumerate}[wide,label=(\alph*)]
    \item This inequality follows directly from upper semi-continuity, \Cref{prop: propertiesmult}\ref{prop: propertiesmult2}.
    \item  Over any point $x\in X$, the fiber $\pi_1^{-1}(\{x\})$ consists of all $n-d-1$-dimensional subspaces of $\PP^n$ that contain $x$. Writing $\PP^n=\PP(\KK^{n+1})$ and identifying $x$ with a line in $\KK^{n+1}$, the fiber is hence $\PGr(n-d-2,\PP(\KK^{n+1}/x))$. In other words the first projection $\pi_1$ turns   $\mathcal{I}$ into the relative Grassmann bundle $\PGr(n-d-2,\mathcal{Q}_X)$ over $X$, where $\mathcal{Q}_X$ is the (projectivized) universal quotient bundle of $\PP^n$ restricted to $X$. In particular $\pi_1$ trivializes locally on $X$, i.e. we can find an open affine neighborhood $U \subset X$ of $p$ such that
\[
\pi_1^{-1}(U) \cong U \times \PGr(n-d-2,\PP^{n-1})
\]
Let $(p,\Lambda')$ be the image of $(p,\Lambda_p)$ under the above identification.
By \Cref{prop: propertiesmult}\ref{prop: propertiesmult3},
\begin{align*}
\mult_{(p,\Lambda_p)} \mathcal{I}
&=
\mult_{(p,\Lambda')}(U\times\PGr(n-d-2,\PP^{n-1}))\\ &= (\mult_{p} U) (\mult_{\Lambda'} \PGr(n-d-2,\PP^{n-1})) \\
&= \mult_{p} U\\
&= \mult_p X
\end{align*}
where we use that $\mult_{\Lambda'} \PGr(n-d-2,\PP^{n-1})=1$ since the Grassmannian is smooth, see \Cref{prop: propertiesmult}\ref{prop: propertiesmult1}.
\item The dominant projection $\pi_2: \mathcal{I} \twoheadrightarrow \mathcal{C}(X)$ induces the inclusion of local rings
  \[
  R \coloneq \mathcal{O}_{ \mathcal{C}(X),\Lambda_p} \hookrightarrow \mathcal{O}_{\mathcal{I},(p,\Lambda_p)} \eqcolon S
  \]
  Let $\mm_R, \mm_S$ be the maximal ideals of $R,S$ respectively. We define $J \coloneq \mm_RS$ as the ideal generated by $\mm_R$ in $S$ under the above inclusion. We have, by definition of multiplicity at a point, 
  $\mult_{(p,\Lambda_p)}(\mathcal{I}) = e(\mm_S,S)$ and $\mult_{\Lambda_p}(\mathcal{C}(X)) = e(\mm_R,R)$. Since $X$ is irreducible, so are $\mathcal{C}(X)$ and $\mathcal{I}$ and hence $R$, $S$ are integral domains. Furthermore $S$ is a rank 1 $R$-module since the projection map $\pi_2$ is generically one-to-one, so we can apply \Cref{thm: ZariskiSamuel}:
\begin{align*}
   e(\mm_R,R)= \underbrace{[S:R]}_{=1}e(\mm_R,R)=\underbrace{[S/\mm_S:R/\mm_R]}_{=[\KK:\KK]=1}e(J,S)=e(J,S)\geq e(\mm_S,S)
\end{align*}
Here the last inequality follows from \Cref{prop: locmult} \ref{prop: locmult1} using $J\subseteq \mm_S$. This concludes the proof since by definition $e(\mm_R,R)=\mult_{\Lambda_p}(\mathcal{C}(X))$ and $e(\mm_S,S)=\mult_{(p,\Lambda_p)}(\mathcal{I})$. \qedhere
\end{enumerate}
\end{proof}

\begin{lemma}\label{lem: multdommap}
    Let $X,X'$ be smooth varieties. Let
    \[
    \varphi: X' \dashrightarrow X
    \]
    be a rational map and suppose that $Y \subset X$ and its (scheme-theoretic) preimage $\varphi^{-1}(Y)\subset X'$ have codimension 1.
    Pick $p\in \varphi^{-1}(Y)$ such that $\varphi$ is defined at $p$. Then
    \[
    \mult_p(\varphi^{-1}(Y)) \geq \mult_{\varphi(p)}(Y)
    \]
\end{lemma}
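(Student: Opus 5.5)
Since $X$ is smooth and $Y$ has codimension $1$, locally around $q\coloneq\varphi(p)$ the subscheme $Y$ is a Cartier divisor: the local ring $\OO_{X,q}$ is a regular local ring, hence a UFD. Its height-one ideals are therefore principal, so $\mathcal{I}(Y)\OO_{X,q}=f\,\OO_{X,q}$ for a single $f\in\OO_{X,q}$. Here I interpret $Y$ as a (possibly non-reduced) hypersurface subscheme, as in \Cref{ex:multiesgeovsalg}, so that its ideal is locally principal. By \Cref{prop: propertiesmult}\ref{prop: propertiesmult5},
\[
\mult_q(Y)=\ord_{\mm_{X,q}}(f).
\]

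Since $\varphi$ is defined at $p$, it induces a local homomorphism $\varphi^\#\colon\OO_{X,q}\to\OO_{X',p}$ with $\varphi^\#(\mm_{X,q})\subseteq\mm_{X',p}$. The scheme-theoretic preimage $\varphi^{-1}(Y)$ is, near $p$, cut out by the pullback ideal $\mathcal{I}(Y)\OO_{X',p}=\varphi^\#(f)\,\OO_{X',p}$. Thus $\varphi^{-1}(Y)$ is locally defined by the single element $g\coloneq\varphi^\#(f)$. Since $\varphi^{-1}(Y)$ has codimension $1$, $g$ is nonzero and not a unit at $p$, because $p$ lies on the preimage. As $X'$ is smooth, $\OO_{X',p}$ is regular, and \Cref{prop: propertiesmult}\ref{prop: propertiesmult5} applies again to give
\[
\mult_p(\varphi^{-1}(Y))=\ord_{\mm_{X',p}}(g).
\]

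It then remains to compare orders. If $f\in\mm_{X,q}^m$ with $m=\ord(f)$, then, since $\varphi^\#$ is a local ring homomorphism,
\[
g=\varphi^\#(f)\in\varphi^\#(\mm_{X,q})^m\OO_{X',p}\subseteq\mm_{X',p}^m,
\]
so $\ord_{\mm_{X',p}}(g)\geq m$, which is the claim.

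The only delicate point is the bookkeeping about what \enquote{scheme-theoretic preimage of codimension $1$} means. I need the local ideal of $\varphi^{-1}(Y)$ at $p$ to be exactly the extension of $\mathcal{I}(Y)_q$, and $g$ to be a nonzero non-unit, so that \Cref{prop: propertiesmult}\ref{prop: propertiesmult5} is applicable. The hypothesis that $\varphi^{-1}(Y)$ has codimension $1$ exactly excludes $g=0$, which would happen if the image of $\varphi$ near $p$ were contained in $Y$. So this step should be routine once these points are stated carefully.
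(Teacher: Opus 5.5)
Your proof is correct and is essentially the paper's argument: write $Y$ locally as the zero scheme of a single $f$, pull it back to $\varphi^\#(f)$, apply \Cref{prop: propertiesmult}\ref{prop: propertiesmult5} on both sides, and use that a local homomorphism maps $\mm_{X,q}^m$ into $\mm_{X',p}^m$. Your inclusion $\varphi^\#(\mm_{X,q})^m\OO_{X',p}\subseteq\mm_{X',p}^m$ is stated more accurately than the paper's equality, but note that in a regular local ring only height-one \emph{primes} (more generally, unmixed height-one ideals) are principal, e.g.\ $(x^2,xy)$ is a non-principal height-one ideal, so it is your explicit hypersurface interpretation of $Y$ (which the paper makes implicitly) that makes $\mathcal{I}(Y)\OO_{X,q}$ principal.
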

\begin{proof}

   Since multiplicity is a local property, we can assume that $\varphi$ is a regular map. The map $\varphi$ induces a morphism of local rings $\varphi^*:\mathcal{O}_{X,\varphi(p)}\to \mathcal{O}_{X',p}$.
    We use the characterization of multiplicity for hypersurfaces given in \Cref{prop: propertiesmult} \ref{prop: propertiesmult5}.

    Let $f \in \mathcal{O}_{X,\varphi(p)}$ be the regular function defining $Y$ in a neighbourhood of $\varphi(p)$. Then $\varphi^*f \in \mathcal{O}_{X',p}$ is a regular function defining $\varphi^{-1}(Y)$ in a neighbourhood of $p$. Since $X$ and $X'$ are smooth, $\mathcal{O}_{X,\varphi(p)}$ and $\mathcal{O}_{X',p}$ are regular so we are in the situation of \Cref{prop: propertiesmult} \ref{prop: propertiesmult5}.

    Now suppose that $l\geq 0$ is such that
    \[
    f \in \mm^l_{X,\varphi(p)}
    \]
  %  Then we can write
  %  \[
  %  f = \sum_{i \in I} h_ig_i^{(1)}\cdots g_i^{(k)}
  %  \]
   % where $I$ is a finite set and for every $i \in I$, $h_i \in \mathcal{O}_{X,\varphi(p)}$ and for every $j = 1,\cdots,k$, $g_i^{(j)} \in \mm_{X,\varphi(p)}$.
   % Since $\varphi^{-1}(\mm_{X,\varphi(p)})$ is a proper ideal of $\mathcal{O}_{X',p}$, we have that $\varphi^{-1}(\mm_{X,\varphi(p)}) \subset \mm_{X',p}$ and therefore
   % $
   % \varphi^*g_i^{(j)} \in \mm_{X',p}, \quad \forall i \in I, j = 1,\cdots,k
   % $
  % This implies that
  % \[
  % \varphi^*f = \varphi^*(\sum_{i \in I} h_ig_i^{(1)}\cdots g_i^{(k)}) = \sum_{i \in I} (\varphi^*h_i)(\varphi^*g_i^{(1)})\cdots (\varphi^*g_i^{(k)}) \in \mm_{X',p}^k
   %\]
   Then since $\varphi^*$ is a morphism of rings,
   \[
   \varphi^*f \in \varphi^{-1}(\mm_{X,\varphi(p)})^l=\mm_{X',p}^l
   \]
   Since this holds for every $k \geq 0$ we conclude that $\ord_{\mm_{X,\varphi(p)}}(f) \leq \ord_{\mm_{X',p}}(\varphi^*f)$ and thus
   \[
    \mult_p(\varphi^{-1}(Y)) \geq \mult_{\varphi(p)}(Y) \qedhere
   \]
\end{proof}

We now apply the above lemmas to our specific situation. For a flat $F$ of $M = M(L)$, recall that $T_F = \{[p_0:\cdots:p_n] \in \PP^n \text{ such that } p_i = 0 \text{ iff } i \notin F\} \subset \PP^n$ and $\Lambda_F = \overline{T_F}$.

\begin{corollary}\label{cor: lowerbound1}
Let $k\geq 2$. For every flat $F$ of $M$ we have the inequality
\[
   \mult_{L^{-k}_F}(L^{-k}) \leq m_F
\]
where $m_F$ denotes the multiplicity of the factor $\Delta(L_F^{-k+1})$ in the polynomial $E_{L,-k}$.
\end{corollary}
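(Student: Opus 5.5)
We combine \Cref{lem: chowMult} and \Cref{lem: multdommap}, with $X = L^{-k}$ and $Y = L^{-k}_F$. By \Cref{lem: chowMult},
\[
\mult_{L^{-k}_F}(L^{-k}) \leq \mult_{\mathcal{C}_X(L^{-k}_F)}\mathcal{C}(L^{-k}).
\]
By upper semicontinuity (\Cref{prop: propertiesmult}\ref{prop: propertiesmult2}), the right-hand side is at most $\mult_{\Lambda}\mathcal{C}(L^{-k})$ for \emph{every} $\Lambda \in \mathcal{C}_X(L^{-k}_F)$. It then suffices to produce a point $z$ on $(L_F^{-k+1})^\vee$, generic in that hypersurface, such that the composite $r\circ i$ is defined at $z$ and $\Lambda \coloneq r(i(z)) \in \mathcal{C}_X(L^{-k}_F)$.

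At such a $z$ we apply \Cref{lem: multdommap} to $\varphi = r\circ i \colon (\PP^n)^* \dashrightarrow \PGr(n-d-1,\PP^n)$. Both source and target are smooth, and $\mathcal{C}(L^{-k})$ is a hypersurface with scheme-theoretic preimage $\VV(E_{L,-k})$, which is a hypersurface since $E_{L,-k}\neq 0$ by \Cref{th: ppalmatdet}. The lemma gives
\[
\mult_z \VV(E_{L,-k}) \geq \mult_{\Lambda}\mathcal{C}(L^{-k}).
\]
Finally, because $z$ is generic on $(L_F^{-k+1})^\vee = \VV(\Delta(L_F^{-k+1}))$, it lies on no other irreducible factor of $E_{L,-k}$. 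The local ring at $z$ is regular, so by \Cref{prop: propertiesmult}\ref{prop: propertiesmult5} and \Cref{ex:multiesgeovsalg} the left-hand side is $\ord_z E_{L,-k} = m_F$. This holds because the order of $\Delta^{m_F}\cdot(\text{unit})$ is $m_F\cdot\ord_z\Delta$, and $\ord_z\Delta = 1$ at a generic point, where the reduced hypersurface is smooth.

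The main obstacle is choosing $z$ so that everything is defined and lands in the right stratum. First, $r$ is defined at $i(z)=A_z$ exactly when $A_z$ has full rank $d+1$. This holds whenever the coordinates $z_i$ with $z_i\neq 0$ span the full matroid rank, which is true for $z$ generic on $(L_F^{-k+1})^\vee$ whenever that hypersurface is not a coordinate hyperplane. If $F=\{i\}$ is a coloop-type situation this needs a separate check, but generically on $\{z_i = 0\}$ the remaining columns still have rank $d+1$ since $M$ is loopless and $i$ is not a coloop; if $i$ is a coloop, $\{i\}$ is disconnected from the rest and $E$ is not connected, and we would handle it by reduction to components. Second, we need $\Lambda = \PP(\Ker A_z)$ to meet $L^{-k}_F$. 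This follows from \Cref{th: ppalmatdet}: $z$ generic in $(L_F^{-k+1})^\vee = \overline{\pi_2(Y_F)}$ is in $\pi_2(Y_F)$, because $\pi_2(Y_F)$ is constructible and dense. So there is $x\in L^{-k}\cap T_F$ with $A_z x = 0$, i.e.\ $x\in \Lambda\cap L^{-k}_F$.

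For disconnected flats $F$ we have $\Delta(L_F^{-k+1})=1$ by \Cref{rem: simplifications}, so $m_F=0$. We would either restrict the statement to connected flats, or interpret it through the inclusion of \Cref{th: ppalmatdet}\ref{3}; the intended content is the connected case.
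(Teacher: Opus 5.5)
Your argument is the paper's. You apply \Cref{lem: chowMult}, then \Cref{lem: multdommap} for $r\circ i$ at a generic point $z$ of $(L_F^{-k+1})^\vee$, link the two by upper semicontinuity along $\mathcal{C}_{L^{-k}}(L^{-k}_F)$, and identify the local multiplicity of $\{E_{L,-k}=0\}$ at $z$ with $m_F$. Your check that a generic $z$ lies in $\pi_2(Y_F)$ and has $\rank A_z=d+1$ makes explicit what the paper only asserts through $\overline{(r\circ i)((L_F^{-k+1})^\vee)}=\mathcal{C}_{L^{-k}}(L^{-k}_F)$; that equality is needed, and in general true, only as an inclusion.

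The coloop case you flag is a genuine exception, since $A_z$ is rank-deficient on all of $\{z_i=0\}$, and the paper's proof passes over it too. Your ``reduction to components'' is not carried out, but you do not need it. If $i$ is a coloop, every nonzero maximal minor $\det(A_S)$ has $i\in S$, so every Pl\"ucker coordinate $\det(A_S)\prod_{j\in S}z_j$ of $A_z$ is divisible by $z_i$. Since $E_{L,-k}$ is homogeneous of degree $\deg L^{-k}$ in these coordinates, $z_i^{\deg L^{-k}}$ divides $E_{L,-k}$. Hence $m_{\{i\}}\geq\deg L^{-k}\geq\mult_{e_i}L^{-k}$, using that multiplicity at a point never exceeds the degree.
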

\begin{proof}
Denote by $\mathcal{E}_{L,-k}$ the scheme defined by $E_{L,-k}$, then we have $m_F=\mult_{\nabla(L_F^{-k+1})}\mathcal{E}_{L,-k}$ (see \Cref{ex:multiesgeovsalg}).
 We first apply \Cref{lem: chowMult} to $X = L^{-k}$ and $Y = L^{-k}_F$ and obtain that
    \[
    \mult_{L^{-k}_F}(L^{-k}) \leq \mult_{\mathcal{C}_{L^{-k}}(L^{-k}_F)}\mathcal{C}(L^{-k})
    \]
    Now consider the map 
    \[
    r \circ i: \PP^n \dashrightarrow \PGr(n-d-1,\PP^n), \quad z \mapsto \PP(\ker(A\cdot \text{diag}(z))).
    \]
    and notice that by definition $\mathcal{E}_{L,-k}$ is the scheme-theoretic preimage of $\mathcal{C}(L^{-k})$ under this map.
    Pick $p\in \nabla(L_F^{-k+1})$ generic as in \Cref{prop: propertiesmult}\ref{prop: locmult2} such that $\mult_p(\mathcal{E}_{L,-k})=\mult_{(L_F^{-k+1})^\vee}(\mathcal{E}_{L,-k})$. Notice that, analogously to the proof of \Cref{th: ppalmatdet} we have
    \[
    \overline{(r \circ i)(((L_F)^{-k+1})^\vee)} = \mathcal{C}_{L^{-k}}(L^{-k}_F)
    \]
    and therefore $(r\circ i)(p)\in \mathcal{C}_{L^{-k}}(L^{-k}_F)$. Since both $\mathcal{C}(L^{-k})$ and $\mathcal{E}_{L,-k}$ are hypersurfaces, we can apply \Cref{lem: multdommap} to $X=\PGr(n-d-1,\PP^n), X'=\PP^n, \varphi=r\circ i$ to obtain 
    \[
    \mult_{p}(\mathcal{E}_{L,-k}) \geq \mult_{r \circ i(p)}(\mathcal{C}(L^{-k})).
    \]
    By upper semi-continuity \Cref{prop: propertiesmult}\ref{prop: propertiesmult2} we have 
    \[
    \mult_{r \circ i(p)}(\mathcal{C}(L^{-k}))\geq \mult_{\mathcal{C}_{L^{-k}}(L^{-k}_F)}\mathcal{C}(L^{-k})
    \]
    which finishes the proof.
\end{proof}

\subsection{Local multiplicities of \texorpdfstring{$L^{-k}$}{L\^{}-1}}\label{sec:local mult of L-k}
\label{sec:computing_multi}
Next, we compute the multiplicity $\mult_{L^{-k}_F}(L^{-k})$. An important ingredient is the degree of the coordinatewise $k$-th power map after restricting to $L^{-1}$, which we briefly recall here. Remember that $c(M)$ denotes the number of connected components of $M$.
\begin{lemma}\label{lem:degreeofkthpower}
Consider the coordinatewise $k$-th power map \begin{align*}
    h_k:L^{-1}\to L^{-k}, \qquad x\mapsto x^k.
\end{align*}
This map is finite to one with $\deg(h_k)=k^{c(M)-1}$. In particular the corresponding map of affine cones $\widehat{L^{-1}}\to \widehat{L^{-k}}$ is finite to one with degree $k^{c(M)}$
\end{lemma}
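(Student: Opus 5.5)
We compute the degree of $h_k$ by counting the preimages of a general point of the torus part $L^{-1}\cap T_E$. Note first that $h_k$ is the restriction of the coordinatewise $k$-th power map $\PP^n\to\PP^n$. That map is finite (it is a morphism with finite fibers and $\phi_k^*\OO(1)=\OO(k)$), so $h_k$ is finite. Since $h_k\circ\phi_{-1}=\phi_{-k}$ on $L$, the map $h_k$ is dominant onto $L^{-k}$. Moreover, both $L^{-1}$ and $L^{-k}$ are irreducible of dimension $d$.

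On the open torus $T_E$, the map $x\mapsto x^k$ is a group homomorphism whose fibers are cosets of the finite group $\mu_k^{E}/\mu_k$, where the quotient is by the diagonal $\mu_k$. For a general $y=x^k$ with $x\in L^{-1}\cap T_E$, the fiber $h_k^{-1}(y)\cap T_E$ is therefore
\[
\Set{\zeta x | \zeta\in\mu_k^E,\ \zeta x\in L^{-1}}/\mu_k .
\]
The task is to identify the stabilizer-type subgroup $G\coloneq\Set{\zeta\in(\KK^\times)^E | \zeta\cdot(L^{-1}\cap T_E)=L^{-1}\cap T_E}$ and to show that, for general $x$, $\zeta x\in L^{-1}$ forces $\zeta\in G$.

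The key claim is that $G$ is exactly the subtorus of vectors that are constant on each connected component of $M$. Since $L^{-1}\cap T_E$ is the coordinatewise inverse of $L\cap T_E$, this is equivalent to the corresponding statement for $L$: a diagonal rescaling $\operatorname{diag}(\zeta)$ maps the linear space $L$ to itself if and only if $\zeta$ is constant on connected components. This is a standard fact, and I would prove it directly. If $\operatorname{diag}(\zeta)$ preserves $\Row A$, then each eigenspace decomposition gives $\Row A=\bigoplus_c (\Row A\cap \KK^{\zeta^{-1}(c)})$. Hence $M$ is the direct sum of its restrictions to the level sets of $\zeta$, so every level set is a union of components. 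The converse is immediate.

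For general $x$, one must also rule out that $\zeta x\in L^{-1}$ happens for some $\zeta\notin G$. For each of the finitely many $\zeta\in\mu_k^E\setminus G$, the condition $\zeta x\in L^{-1}$ defines the closed subset $L^{-1}\cap\zeta^{-1}L^{-1}$. This subset is proper in the irreducible $L^{-1}$, since otherwise $\zeta$ would preserve $L^{-1}$ and hence lie in $G$. So a general $x$ avoids all of them. Counting, the fiber then has size $|G\cap\mu_k^E|/|\mu_k|=k^{c(M)}/k=k^{c(M)-1}$.

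This step is the one I expect to need care. The fibers of $h_k$ over torus points are reduced because the power map is étale on the torus in characteristic zero, so the set-theoretic count of the general fiber equals the degree. Also, the boundary strata have lower dimension and do not affect the generic count.

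For the affine cones, the same argument runs on $\widehat{L^{-1}}\cap(\KK^\times)^E$ without quotienting by the diagonal $\mu_k$. The relevant group is $\{\zeta\in\mu_k^E\text{ constant on components}\}$, of order $k^{c(M)}$. Equivalently, the projective degree is multiplied by the degree $k$ of $t\mapsto t^k$ on the fibers of the cone.
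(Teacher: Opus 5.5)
Your proof is correct and follows essentially the same route as the paper: the degree is the number of $k$-th-root-of-unity rescalings (modulo the diagonal $\mu_k$) that preserve the variety, and these are exactly the rescalings constant on the connected components of $M$. The paper passes to $L\to L^k$ via coordinate inversion and cites \cite[Theorem 2.6]{Dey2020} and \cite[Lemma 3.2]{briand2025} for these two facts, whereas you prove both inline (the generic-fiber count over the torus and the eigenspace-decomposition argument), so your version is self-contained.
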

\begin{proof}
    Since the rational map which inverts coordinates is one-to-one, the degree of $h_k$ equals the degree of $\widetilde{h}_k:L\to L^k,x\mapsto x^k$. This degree can be easily computed using \cite[Theorem 2.6]{Dey2020}. For more details we refer to the proof of \cite[Lemma 3.2]{briand2025}, where the argument is carried out for the case $k=2$. The additional factor of $k$ in the affine case comes from the fact that each affine representative of a point in a fixed fiber can be rescaled by the $k$ different $k$-th roots of unity which does not change the projective point.
\end{proof}

\begin{proposition}\label{lem: lowerbound2}
For every flat $F$ of $M$ we have the equality    
\[
\mult_{L^{-k}_F}(L^{-k})=k^{\rank(M/F)-c(M)+c(M|_F)}\mu^+(M/F).
\]
\end{proposition}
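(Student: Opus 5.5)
We compute the multiplicity on affine cones via the finite map $\widehat{h}_k\colon \widehat{L^{-1}}\to\widehat{L^{-k}}$, $x\mapsto x^k$, using \Cref{thm: ZariskiSamuel}. By \Cref{prop: propertiesmult}\ref{prop: propertiesmult4} and \ref{prop: propertiesmult2}, it suffices to compute $\mult_y \widehat{L^{-k}}$ for a general point $y$ of the stratum $\widehat{L^{-k}}\cap T_F$.

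Set $R=\OO_{\widehat{L^{-k}},y}$ and let $S$ be the semi-local ring of $\widehat{L^{-1}}$ at the fiber $\widehat h_k^{-1}(y)$. The map $\widehat h_k$ is the restriction of the finite map $\AA^E\to\AA^E$, so $S$ is a finite $R$-module. Both are domains of the same dimension, and by \Cref{lem:degreeofkthpower} we have $[S:R]=k^{c(M)}$. All residue fields equal $\KK$. \Cref{thm: ZariskiSamuel} with $J=\mm_R$ then gives
\[
k^{c(M)}\,e(\mm_R,R)=\sum_{x\in \widehat h_k^{-1}(y)} e(\mm_R S_{x},S_x).
\]
The fiber over $y$ is determined as follows. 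The coordinates in $E\setminus F$ vanish, while the coordinates in $F$ are $k$-th roots of those of $y$. A point $x$ of $\widehat{L^{-1}}\cap T_F\cong \widehat{L_F^{-1}}\cap T_F$ with $x^k=y$ can be obtained by rescaling a fixed preimage by roots of unity constant on the connected components of $M|_F$. This follows by applying \Cref{lem:degreeofkthpower} to $L_F$, which has matroid $M|_F$. Hence the fiber has $k^{c(M|_F)}$ points, and by torus symmetry all local terms are equal.

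The main step is computing $e(\mm_R S_x,S_x)$ at one such $x$. Here I would use the étale-local product structure of \cite{ELIAS201636}: near $x$, $\widehat{L^{-1}}$ is étale-locally isomorphic to $(\widehat{L_F^{-1}}\cap T_F)\times \widehat{(L/F)^{-1}}$, a smooth factor of dimension $\rank F$ times the cone point of the reciprocal space of the contraction. In these coordinates the $F$-coordinates are units. So the ideal $\mm_R S_x$ is generated by $y_i$ minus constants for $i\in F$, which are local parameters on the smooth factor since $h_k$ is étale there, together with $x_j^k$ for $j\notin F$. Via \Cref{prop: locmult}\ref{prop: locmult2} and \ref{prop: locmult3}, this gives
\[
e(\mm_RS_x,S_x)=e\bigl((x_j^k)_{j\notin F},\ \OO_{\widehat{(L/F)^{-1}},0}\bigr).
\]
The ideal $(x_j^k)$ is homogeneous on the cone $\widehat{(L/F)^{-1}}$ of dimension $\rank(M/F)$, and is primary to the maximal ideal since the cone meets $\{x=0\}$ only at the origin. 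I expect its multiplicity to be $k^{\rank(M/F)}\deg (L/F)^{-1}=k^{\rank(M/F)}\mu^+(M/F)$. The reason is that the multiplicity of an ideal generated by forms of degree $k$ on a graded domain is $k^{\dim}$ times the degree. This step, together with checking that the étale identification of \cite{ELIAS201636} carries $\mm_R S_x$ to exactly this ideal, is the main obstacle. A fallback is to argue directly on tangent cones using \Cref{ex: multinRLS}, where the tangent cone already splits as $\widehat{L|_F}\times\widehat{(L/F)^{-1}}$.

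Combining these, we get
\[
k^{c(M)}\mult_y\widehat{L^{-k}}=k^{c(M|_F)}\cdot k^{\rank(M/F)}\mu^+(M/F),
\]
which is the claimed formula.
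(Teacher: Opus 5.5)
Your proposal is correct and follows the paper's proof essentially step for step. The shared steps are: Zariski--Samuel for the $k$-th power map on affine cones with $[S:R]=k^{c(M)}$ and $k^{c(M|_F)}$ fiber points; the Elias--Proudfoot--Wakefield étale splitting with the product formula; and $e(\langle z_j^k\rangle_{j\notin F})=k^{\rank(M/F)}\mu^+(M/F)$, because this ideal is a reduction of $\mm^k$, which is what your graded-forms argument amounts to. The compatibility you flag as the main obstacle follows directly from the explicit form of the EPW map: $\phi^\#(z_i)=x_i$ for $i\in F$, and $\phi^\#(z_j)=x_j/(1-b_jx_j)$ with $1-b_jx_j$ a unit at $x$ for $j\notin F$.

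One small correction: drop the appeal to torus symmetry for equating the local terms. Rescalings that are constant only on the components of $M|_F$ need not preserve $L^{-1}$; for example, for $M=U_{3,4}$ and $F=\{0,1\}$ only scalars do. This is harmless, because your local computation applies verbatim at every point of the fiber.
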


\begin{proof}
    Without loss of generality we may assume $F=\{0,\ldots,s\}$ for some $s<n$. By \Cref{prop: propertiesmult}\ref{prop: propertiesmult4} we have $\mult_{L^{-k}_F}(L^{-k})=\mult_{\widehat{L^{-k}_F}}(\widehat{L^{-k}})$ and hence we pass to affine cones.
    Fix $q\in \widehat{L^{-k}_F}$ such that both \begin{itemize}
        \item $\mult_{\widehat{L^{-k}_F}}(\widehat{L^{-k}})=\mult_q(\widehat{L^{-k}})$, see \Cref{prop: propertiesmult}\ref{prop: propertiesmult2}, and
        \item in coordinates we have $q=(q_0,\ldots,q_s,0,\ldots,0)$ for $q_0,\ldots,q_s\in \KK^\times$, i.e. $q\in \widehat{T_F}$.
    \end{itemize}
     The surjective $k$-th power map $\widehat{L^{-1}}\to \widehat{L^{-k}}$ induces the following inclusion of affine coordinate rings: \begin{align*}
        R'\coloneq \KK[y_0,\dots,y_n]/I(\widehat{L^{-k}})&\hookrightarrow\KK[x_0,\dots, x_n]/I(\widehat{L^{-1}})\eqcolon S'  \\
        y_i&\mapsto x_i^k
    \end{align*}
    Since the $k$-th power map is finite, $S'$ is a finitely generated $R'$-module under this map.
    
    The maximal ideal corresponding to $q$ is $\widetilde{\mm_q}=\langle y_1-q_1,\ldots y_s-q_s,y_{s+1},\ldots,y_n \rangle$.
    Localizing at this maximal ideals we obtain the inclusion \begin{align*}   R\coloneq\left(\KK[y_1,\ldots,y_n]/I(\widehat{L^{-k}})\right)_{\widetilde{\mm_q}}&\hookrightarrow\left(\KK[x_1,\cdots x_n]/I(\widehat{L^{-1}})\right)\otimes_{R'}R\eqqcolon S
    \end{align*}
    We set $\mm_q\coloneq\widetilde{\mm_q}R$. Under the inclusion of $R$ into $S$ we define $J\coloneq m_qS$. $S$ is a semi-local ring with maximal ideals $\mm_p=\langle x_0-p_0,\ldots, x_s-p_s,x_{s+1},\ldots,x_n \rangle S$ for any point $p\in \widehat{L^{-1}}$ with $p^k=q$. Write $J_p$ for the $\mm_p$-primary component of $J$, then we have  \begin{align*}
        J_pS_{\mm_p}&=\langle x_0^k-q_0,\ldots,x_s^k-q_s,x_{s+1}^k,\ldots,x_n^k\rangle S_{\mm_p} \\
        &=\langle (x_0-p_0)f_0,\ldots,(x_s-p_s)f_s,x_{s+1}^k,\ldots,x_n^k\rangle S_{\mm_p} \\
        &=\langle x_0-p_0,\ldots,x_s-p_s,x_{s+1}^k,\ldots,x_n^k\rangle S_{\mm_p} 
    \end{align*}
    where $f_i=\sum_{j=0}^kx_i^jp_i^{k-j}$.
    In the last step we used that the polynomials $f_i$ are in the complement of the maximal ideal $\mm_p$ and hence are units in the local ring $S_{\mm_p}$.

    By choice of $q$ we have $\mult_{L^{-k}_F}(L^{-k})=\mult_q(\widehat{L^{-k}})=e(\mm_q,R)$ and by
    \Cref{thm: ZariskiSamuel} we get \begin{align*}
        [S:R]e(\mm_q,R)=\sum_{\substack{p\in L^{-1} \\ p^k=q}}[S/\mm_p:R/\mm_q]e(J_p,S)
    \end{align*}
    Here $[S:R]$ equals the degree of the finite $k$-th power map on affine cones, which is $k^{c(M)}$ due to \Cref{lem:degreeofkthpower}. On the other hand we have $[S/\mm_p:R/\mm_q]=[\KK:\KK]=1$. Therefore it suffices to show $e(J_p,S)=k^{\rank(M/F)}\mu^+(M/F)$ for every $p\in \widehat{L^{-1}}$ with $p^k=q$. We defer this lengthy computation to \Cref{lem: splittingmultiplicity}. Indeed applying again \Cref{lem:degreeofkthpower}, there are precisely $k^{c(M|_F)}$ preimages $p\in \widehat{L^{-1}}$ of the generic point $q\in L\cap T_F$ under the squaring map. Hence given the result of \Cref{lem: splittingmultiplicity} we get \begin{align*}
        k^{c(M)}\mult_{L^{-k}_F}(L^{-k})=\sum_{\substack{p\in L^{-1} \\ p^k=q}}k^{\rank(M/F)}\mu^+(M/F)=k^{c(M|F)+\rank(M/F)}\mu^+(M/F)
    \end{align*}
    which finishes the proof after dividing by $k^{c(M)}$ on both sides.
\end{proof}

\begin{lemma}\label{lem: splittingmultiplicity}
    Using the notations of the proof of \Cref{lem: lowerbound2}, fix $p \in L^{-1}$ such that $p^k=q$. We have that
    \[
    e(J_p,S)=k^{\rank(M/F)}\mu^+(M/F)
    \]
\end{lemma}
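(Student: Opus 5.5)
Throughout, write $X=\widehat{L^{-1}}\subseteq\AA^E$ and let $p\in X\cap\widehat{T_F}$, so $p_i\neq0$ exactly for $i\in F$. The computation above gives
\[
J_pS_{\mm_p}=\langle x_0-p_0,\ldots,x_s-p_s,\,x_{s+1}^k,\ldots,x_n^k\rangle S_{\mm_p}.
\]
So $e(J_p,S)$ is the multiplicity, in $\OO_{X,p}$, of the ideal generated by the $F$-coordinates recentred at $p$ together with the $k$-th powers of the coordinates outside $F$.

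The plan is to replace $\OO_{X,p}$ by a product of two simpler local rings via an étale map, following Elias, Proudfoot and Wakefield \cite{ELIAS201636}. Their local description of the flat strata of reciprocal linear spaces gives an étale neighbourhood of $p$ in $X$ isomorphic to an étale neighbourhood of $(p',0)$ in
\[
V\times\widehat{(L/F)^{-1}},
\]
where $V$ is the smooth stratum through $p$ and $\dim V=\rank(M|_F)$. The key point to extract from their construction (as in \Cref{ex: multinRLS}) is how the coordinates match up. The coordinates $x_j$, $j\notin F$, should correspond up to units to the coordinates $u_j$ of $\widehat{(L/F)^{-1}}\subseteq\AA^{E\setminus F}$. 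Concretely, $x_j=u_j\cdot g_j$ with $g_j$ a unit, possibly after multiplying by functions of the $x_i$, $i\in F$. The recentred $F$-coordinates $x_i-p_i$ should generate, modulo the ideal of the $u_j$, the maximal ideal of $V$ at $p'$.

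I will check this by choosing the representing matrix $A$ in block form adapted to $F$, in the spirit of part (iii) of the proof of \Cref{th: ppalmatdet}. I then solve the parametrization $x_i=\ell_i(t)^{-1}$ near the point, which decouples the $F$-directions from the contracted directions.

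Granting this identification, I will transport $J_p$ through the étale map. Since $x_j^k$ and $u_j^k$ differ by a unit, and the $F$-part generates the maximal ideal of the smooth factor, the transported ideal is $\mm_{V,p'}\otimes 1+1\otimes\langle u_j^k\rangle$. Two nontrivial points must be checked here:
\begin{itemize}
\item replacing $x_i-p_i$ by local coordinates on $V$ must not require the $u_j$-terms;
\item any $u$-dependence of $x_i-p_i$ must lie in the ideal already generated.
\end{itemize}
I would handle both by noting that both ideals have the same radical and comparing them directly. I expect them to be equal, because $x_i-p_i$ differs from a $V$-coordinate by an element of $\langle u_j\rangle\cdot\mm$, and this can be absorbed by a Nakayama-type argument on the associated graded ring. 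The residue fields are both $\KK$, so \Cref{prop: locmult}\ref{prop: locmult2} lets me compute multiplicities on the product side. Then \Cref{prop: locmult}\ref{prop: locmult3} gives
\[
e(J_p,S)=e(\mm_{V,p'},\OO_{V,p'})\cdot e(\langle u_j^k\rangle,\OO_{\widehat{(L/F)^{-1}},0})=1\cdot e(\langle u_j^k\rangle,\OO).
\]

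It then remains to compute $e(\langle u_j^k\rangle,\OO_{Z,0})$ for the cone $Z=\widehat{(L/F)^{-1}}$, which has dimension $r=\rank(M/F)$. The ring $\OO_{Z,0}$ is a standard graded domain, hence Cohen–Macaulay (reciprocal planes are CM by Proudfoot–Speyer). The ideal $\langle u_j^k\rangle$ is primary to the irrelevant ideal, and for $\mm$-primary ideals in a graded CM ring I will use the following. Pick $r$ general $\KK$-linear combinations of the $u_j^k$; these form a minimal reduction of $\langle u_j^k\rangle$ (up to integral closure), and they are a regular sequence of degree-$k$ forms. The multiplicity is then
\[
\operatorname{len}\bigl(\OO/(\text{$r$ general forms of degree $k$})\bigr)=k^r\deg Z=k^{r}\mu^+(M/F),
\]
using $\deg(L/F)^{-1}=\mu^+(M/F)$ as in \Cref{ex: multinRLS}. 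The reduction claim itself needs justification. An alternative is to note that $\OO$ is finite over $\KK[u^k]$ restricted to $Z$, i.e. over the ring of $Z^k$, and to compute via \Cref{thm: ZariskiSamuel}; I would use whichever is cleaner.

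I expect the main obstacle to be the étale splitting with compatible coordinates: pinning down precisely what \cite{ELIAS201636} provides, and verifying that $J_p$ corresponds exactly to the product ideal.
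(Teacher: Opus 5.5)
Your proposal is correct, and its first half is the paper's argument. The paper cites \cite[Theorem 3.3]{ELIAS201636} for an étale map $\phi$ with $\phi^\#(z_i)=x_i$ for $i\in F$ and $\phi^\#(z_j)=x_j/(1-b_jx_j)$ for $j\notin F$. Your block-form computation produces the same map: with $A$ in block form, $\ell_j=\sum_{i\in F}c_{ji}\ell_i+\ell'_j$, where $\ell'_j$ is the linear form for $L/F$. So $u_j\coloneq 1/\ell'_j$ satisfies $1/u_j=1/x_j-b_j$ with $b_j=\sum_{i\in F}c_{ji}/x_i$. Hence $J_p$ is exactly the extension of $\langle z_i-p_i,\,z_j^k\rangle$, and your two bulleted worries never arise. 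That matters, because the ``Nakayama-type'' absorption you sketch would not give equality of ideals in general. Already $\langle v+u^2,u^3\rangle\neq\langle v,u^3\rangle$ in $\KK[u,v]$, even though $u^2\in\langle u\rangle\cdot\mm$.

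You genuinely differ only in computing $e(\langle u_j^k\rangle,\OO_{Z,0})$. The paper first shows, by a pigeonhole argument on monomials, that $\langle z_j^k\rangle$ is a reduction of $\mm^k$. It then gets $e(\mm^k)=k^{\dim}e(\mm)$ by comparing Hilbert polynomials. This is elementary and uses no structural property of $Z$ beyond $e(\mm)=\deg Z$. You instead use Cohen--Macaulayness of reciprocal planes and a minimal reduction by $r$ general combinations of the $u_j^k$. You then read $k^r\deg Z$ off the Hilbert series of the Artinian quotient. This is valid and gives a clean length interpretation, but it imports Proudfoot--Speyer and Northcott--Rees, where the paper needs neither.

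Three further remarks on your version:
\begin{itemize}
\item ``Standard graded domain, hence Cohen--Macaulay'' is a non sequitur. The CM property comes from the shellable broken-circuit degeneration, not from being a graded domain.
\item The reduction claim you flag is easy. Apply graded Noether normalization to the subalgebra $B$ generated by the $u_j^k$ to get the $r$ forms $\theta$. Finiteness of $B$ over $\KK[\theta]$ then gives $I^{m+1}=(\theta)I^m$ for $m\gg0$.
\item Your Zariski--Samuel alternative also works. It uses \Cref{lem:degreeofkthpower} for $L/F$ together with $\deg(L/F)^{-k}=k^{\rank(M/F)-c(M/F)}\mu^+(M/F)$.
\end{itemize}
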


\begin{proof}
By definition $e(J_p,S)=e(J_pS_{\mm_p},S_{\mm_p})$.
By \cite[Theorem 3.3]{ELIAS201636} there exists an open set $W_F^\circ \subset  \widehat{L^{-1}}$ containing $p$ and a map
\begin{align*}
\phi: W_F^\circ \rightarrow \widehat{(L_F)^{-1}} \times \widehat{(L/F)^{-1}}
\end{align*}
which sends $p$ to $((p_0,\cdots,p_s),0)$ and is étale at $p$. Here $L/F$ denotes the intersection of $L$ with the coordinate hyperplanes corresponding to $F$: $L/F=L\cap\bigcap_{i\in F}\{x_i=0\}$.

It induces a morphism of local rings
$\phi^{\#}\colon M_{\widetilde{\mm}_p} \rightarrow S_{\mm_p}$
where
\begin{align*}
M \coloneq 
\underbrace{(\KK[z_0,\cdots,z_s]/I(\widehat{(L_F)^{-1}}))_{\langle z_0-p_0,\ldots,z_s-p_s\rangle}}_{\eqcolon M_1} \otimes_\KK \underbrace{(\KK[z_{s+1},\cdots,z_n]/I(\widehat{(L/F)^{-1}}))_{\langle z_{s+1},\ldots,z_n \rangle} }_{\eqcolon M_2} 
%_{\langle x_1-p_1,\cdots,x_s-p_s,x_{s+1},\cdots,x_n\rangle}
%=((\KK[z_0,\cdots,z_s]/I(L|F))_p \otimes ( \KK[z_{s+1},\cdots,z_n]/I(L/F))_p
\end{align*}
and
\[
\widetilde{\mm}_p =
\langle z_0-p_0,\cdots, z_s-p_s,z_{s+1},\cdots, z_n \rangle M.
\]

Let $J_p'\subseteq M$ be the ideal
\begin{align*}
J_p' &\coloneq \langle z_0-p_0, \cdots, z_s-p_s,z_{s+1}^k, \cdots z_n^k\rangle M 
\end{align*}
The ideal generated by $\phi^\#(J_p'M_{\widetilde{\mm}_p})$ in  $S_{\mm_p}$ is
\begin{align*}
 \phi^\#(J_p'M_{\widetilde{\mm}_p})S_{\mm_p} &= \langle \phi^\#(z_0-p_0), \cdots, \phi^\#(z_s-p_s),\phi^\#(z_{s+1}^k), \cdots, \phi^\#(z_n^k)\rangle S_{\mm_p}\\&=  \langle \phi^\#(z_0)-p_0, \cdots, \phi^\#(z_s)-p_s, \phi^\#(z_{s+1})^k, \cdots \phi^\#(z_n)^k\rangle S_{\mm_p}\\&=J_p
\end{align*}
The last equality is because by construction of $\phi$ (\cite[Theorem 3.3]{ELIAS201636}), for $0 \leq i \leq s$,
\[
\phi^\#(z_i) = x_i
\]
and for $s+1 \leq i \leq n$
\[
\phi^\#(z_i) = \frac{x_i}{1-b_ix_i}
\]
where $b_i\in \KK[\widehat{L^{-1}\cap T_F}]$. Notice that $1-b_ix_i \in S_{\mm_p}\setminus \mm_p$, so it is a unit in $S_{\mm_p}$. 

Since the map $\phi^\#$ is étale, by \Cref{prop: locmult}\ref{prop: locmult2},
\begin{align*}
e(J_p,S_{\mm_p}) =e(\phi^\#(J_p'M_{\widetilde{\mm}_p})S_{\mm_p},S_{\mm_p})= e(J_p'M_{\widetilde{\mm}_p},M_{\widetilde{\mm}_p})=e(J_p',M)
\end{align*}
%Multiplicity is a local statement, therefore it does not change when localizing and so
%\begin{align*}
%e(J,S) = e(J',M_1 \otimes_\KK M_2)
%\end{align*}
Now since $J_p'$ splits as $J_p'=J_1\otimes_{\KK}M_2+M_1\otimes_{\KK}J_2$ where
\begin{align*}
J_1 &= \langle z_0-p_0,\cdots,z_s-p_s\rangle\subseteq M_1,\\
J_2 &= \langle z_{s+1}^k, \cdots, z_n^k \rangle \subseteq M_2,
\end{align*}
by \Cref{prop: locmult}\ref{prop: locmult3}, we get
\begin{equation}\label{eq: J'prod}
e(J_p',M) = e(J_1,M_1)e(J_2,M_2).
\end{equation}

By \Cref{ex: multinRLS}, since $(p_0,\cdots,p_s) \in \widehat{T_F} \cap \widehat{(L_F)^{-1}}$,
\begin{align*}
    e(J_1,M_1) = \mu^+((M|_F)/F) = \mu^+(\emptyset) = 1.
\end{align*}
Alternatively this follows from \Cref{prop: propertiesmult}\ref{prop: propertiesmult1} since the point $(p_0,\ldots,p_s)$ has non-zero coordinates and is therefore a smooth point of $(L_F)^{-1}$.
It remains to determine $e(J_2,M_2)$.

Let $J_3 \coloneq \langle z_{s+1},\cdots, z_n \rangle\subseteq M_2$. Notice that for any $l \geq 0$, $((J_3)^k)^{l+1} $ is generated by all monomials of degree $k(l+1)$, while $ (J_3^k)^lJ_2$  is generated by all monomials of degree $k(l+1)$ such that at least one variable appears with exponent greater or equal to $k$. If $k(l+1) >(k-1)(n-s)$, these two generating sets coincide. Since additionally $J_3^k \supset J_2$, $J_2$ is a reduction of $J_3^k$ in the sense of \cite{Rees_1961}. It follows that
$$
e(J_2,M_2) = e(J_3^k,M_2).
$$
We wish to relate $e(J_3^k,M_2)$ to $e(J_3,M_2)$ which is simply the degree of $(L/F)^{-1}$. To do so note that for any $i=0,\dots, k-2$ and any $l>0$ we have
\[
\left(J_3^{kl+i}/J_3^{kl+k}\right)/\left(J_3^{kl+i+1}/J_3^{kl+k}\right)=J_3^{kl+i}/J_3^{kl+i+1}
\]
as $M_2$-modules and hence in particular as $\KK$-vector spaces. We therefore get 
\[
\dim\left(J_3^{kl+i}/J_3^{kl+k}\right)=\dim(J_3^{kl+i}/J_3^{kl+i+1})+\dim\left(J_3^{kl+i+1}/J_3^{kl+k}\right).
\]
Combining these equations for all $i=0,\dots,k-2$ we deduce 
\[
\dim((J_3^k)^l/(J_3^{k})^{l+1})=\dim(J_3^{kl}/J_3^{kl+k})=\sum_{i=0}^{k-1}\dim(J_3^{kl+i}/J_3^{kl+i+1}).
\]
For $l\gg 0$ this implies the following equality of Hilbert polynomials:
\[
\text{HP}_{J_3^k,M_2}(t)=\sum_{i=0}^{k-1}\text{HP}_{J_3,M_2}(kt+i)\in \ZZ[t].
\]
All Hilbert polynomials appearing in this equation have degree $\dim(M_2)-1$, hence by comparing leading terms and multiplying by $(\dim(M_2)-1)!$ we obtain \begin{align*}
    e(J_3^k,M_2) &= \sum_{i=0}^{k-1}k^{\dim(M_2) -1}e(J_3,M_2) \\&= k^{\dim(M_2)}e(J_3,M_2)\\ &= k^{\rank(M/F)}e(J_3,M_2)
\end{align*}

By definition $e(J_3,M_2)$ is the local multiplicity of the affine cone over $(L/F)^{-1}$ at the origin and hence equals the degree of the projective variety $(L/F)^{-1}$. This degree is $\mu^+(M/F)$ and hence we conclude that 
\[
    e(J_2,M_2) =e(J_3^k,M_2)= k^{\rank(M/F)}\mu^+(M/F). \qedhere
\]
\end{proof}

\subsection{Matching the lower bound}\label{sec:match}

In this section we finish the proof of \Cref{thm:intro_main} by showing that the lower bound we derived in the previous sections is actually an equality. The main ingredient will be the following lemma.

\begin{lemma}\label{lem:combinatorialErecDeg}
Let $M$ be any matroid, then
\[
\rank(M) \cdot \mu^+(M) = \sum_{F} \beta(M|_F)\cdot \mu^+(M/F),
\]
where the summation run over all subsets, equivalently all connected flats of $M$.
\end{lemma}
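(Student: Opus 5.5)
The plan is a direct Möbius-function computation in the lattice of flats $\mathcal{F}(M)$. Write $\hat 0=\operatorname{cl}(\emptyset)$, $\hat 1=E$ and $r=\rank(M)$. We may assume $M$ is loopless. Otherwise $\mu(M)=0$, and every contraction $M/F$ with $F\not\supseteq\operatorname{cl}(\emptyset)$ has a loop, so it reduces to the loopless case after deleting loops.

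First I reduce the sum over subsets to a sum over flats. If $F$ is not a flat, then $M/F$ contains a loop, so $\mu^+(M/F)=0$. If $F$ is a flat that is not connected, then $\beta(M|_F)=0$. Hence the sum may be taken over all flats $F$. I then use the standard expressions
\[
\chi_{N}(t)=\sum_{G\in\mathcal{F}(N)}\mu(\hat 0,G)\,t^{\rank(N)-\rank(G)},\qquad \beta(N)=(-1)^{\rank(N)-1}\chi_N'(1),
\]
together with $\mu(M/F)=\mu_{\mathcal{F}(M)}(F,\hat 1)$. The flats of $M|_F$ form the interval $[\hat 0,F]$, and the flats of $M/F$ form the interval $[F,\hat 1]$. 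Also $\mu^+(M/F)=(-1)^{r-\rank(F)}\mu(F,\hat 1)$.

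Substituting these gives
\[
\beta(M|_F)\,\mu^+(M/F)=(-1)^{r-1}\,\mu(F,\hat 1)\sum_{G\le F}\mu(\hat 0,G)\bigl(\rank(F)-\rank(G)\bigr).
\]
The sign $(-1)^{\rank(F)-1}(-1)^{r-\rank(F)}=(-1)^{r-1}$ does not depend on $F$, which is what makes the computation clean. The right-hand side of the lemma therefore becomes $(-1)^{r-1}$ times a double sum over pairs $G\le F$. I split $\rank(F)-\rank(G)$ into its two terms.

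For the $\rank(F)$ term, I sum over $G$ first and use $\sum_{G\in[\hat 0,F]}\mu(\hat 0,G)=\delta_{F,\hat 0}$. This leaves $\mu(\hat 0,\hat 1)\rank(\hat 0)=0$.

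For the $-\rank(G)$ term, I sum over $F$ first and use $\sum_{F\in[G,\hat 1]}\mu(F,\hat 1)=\delta_{G,\hat 1}$. This leaves $-\mu(\hat 0,\hat 1)\,r$.

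Thus the sum equals $(-1)^{r-1}\cdot(-r\,\mu(M))=r\,(-1)^r\mu(M)=r\,\mu^+(M)$, as claimed.

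The only delicate point I expect is bookkeeping at the degenerate cases, namely the flats $F=\hat 0$ and $F$ of rank $1$, and the conventions for $\beta$ there. For $F=\hat 0$, the restriction $M|_F$ has rank $0$ and $\chi'=0$, so it contributes $0$, consistent with $\beta=0$. For a rank-one flat, $\chi(t)=t-1$ gives $\beta=1$, matching the usual convention. I would check these cases explicitly against the formula $\beta=(-1)^{\rank-1}\chi'(1)$ before running the interchange of summations.
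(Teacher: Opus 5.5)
Your proof is correct and is essentially the paper's argument: the paper writes the signed beta invariant as $(\rank\cdot\mu)*1$ and convolves with $\mu$, which is exactly your interchange of the double sum over $G\le F$ using $\sum_{G\le F}\mu(\hat 0,G)=\delta_{F,\hat 0}$ and $\sum_{F\ge G}\mu(F,\hat 1)=\delta_{G,\hat 1}$, only written in convolution-algebra notation rather than with explicit sums. One small fix: if $M$ has loops the identity is simply $0=0$, since for $F\supseteq\operatorname{cl}(\emptyset)$ the restriction $M|_F$ has a loop and so $\beta(M|_F)=0$; it does not reduce to $M$ with its loops deleted, whose right-hand side is generally nonzero.
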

For the proof we recall the fundamentals of Möbius inversion for matroids \cite[Chapter 7]{WhiteCombGeo1987}. Let $f,g$ be functions that take as input a matroid and return a complex number. Their convolution product is defined as
\[
(f*g)(M) \coloneq \sum_{F \text{ flat of }M} f(M|_F)g(M/F).
\]
This product is associative, \emph{not} commutative, and satisfies \emph{Möbius inversion}:
\[
(\mu * 1)(M) = \delta_{\emptyset,M}, \qquad (1 * \mu)(M) = \delta_{0,\rank(M)}.
\]

\begin{proof}[Proof of \Cref{lem:combinatorialErecDeg}]
Since $\beta(M|_F)\mu^+(M/F)=0$ unless $F$ is a connected flat, we may restrict our attention to the summation over all flats of $M$. The equation in \Cref{lem:combinatorialErecDeg} then asserts that ${\rank} \cdot \mu^+ = \beta * \mu^+$.
Define the \enquote{signed $\beta$-invariant} as
\[
\beta^\pm(M) \coloneq (-1)^{\rank M} \beta(M) = -\chi'_M(1) = \sum_{F \text{ flat of }M} \mu(M|_F) \rank(M|_F).
\]
Replacing $\mu,\beta^\pm$ by $\mu^+,\beta$, our desired identity then reads ${\rank} \cdot \mu = \beta^{\pm} * \mu$.
The last sum in the defining equation of $\beta^\pm$ shows that $\beta^\pm = ({\rank} \cdot \mu) * 1$. Applying ${}*\mu$ and using Möbius inversion yields the desired formula
\[
\beta^\pm * \mu = ({\rank} \cdot \mu) * 1 * \mu = ({\rank} \cdot \mu) * \delta_{0,\rank(-)} = {\rank} \cdot \mu. \qedhere
\]
\end{proof}

\begin{proof}[Proof of \Cref{thm:intro_main}]
By definition of the $m_F$ in \Cref{th: ppalmatdet},
\[
\deg(E_{L,-k}) = \sum_{F \text{ conn.\ flat of }M} m_F \cdot \deg(L_F^{-k+1})^{\vee}.
\]
Combining \Cref{cor: lowerbound1} and \Cref{lem: lowerbound2}, for any flat $F$ of $M$,
\[
m_F \geq \mult_{L_F^{-k}}(L^{-k}) = k^{\rank(M/F)-c(M)+1}\mu^+(M/F).
\]
We also know from \cite[Theorem 4.8]{briand2025} that $\deg\,(L_F^{-k+1})^{\vee} = k^{\rank(M|_F)-1}\beta(M|_F)$ whenever $F$ is a connected flat, hence
\begin{align}\label{eq: ineqdegs}
\begin{split}
m_F\cdot \deg(L_F^{-k+1})^{\vee} &\geq  k^{\rank(M/F)-c(M)+1}\mu^+(M/F) \cdot k^{\rank(M|_F)-1}\beta(M|_F) \\
&= k^{\rank(M)-1-c(M)+1}\mu^+(M/F) \beta(M|_F).
\end{split}
\end{align}
Therefore, using our notation $d = \rank M-1$,
\begin{align*}
\deg(E_{L,-k}) &\geq \sum_{F \text{ conn.\ flat of }M} k^{d-c(M)+1}\mu^+(M/F) \beta(M|_F) \\
    &= k^{d-c(M)+1} \sum_{F \text{ flat of }M} \mu^+(M/F) \beta(M|_F)\\
    &= k^{d-c(M)+1} \rank(M) \mu^+(M)
\end{align*}
where the last equality uses \Cref{lem:combinatorialErecDeg}.
But we already know from \Cref{th: ppalmatdet} that $\deg(E_{L,-k}) = k^{d-c(M)+1}\rank(M) \mu^+(M)$. Therefore, the above inequality is in fact an equality, and thus the inequalities in equations \eqref{eq: ineqdegs} for \emph{all flats} $F$ must be equalities too.  In particular,
\[
m_F = \mult_{L^{-k}_F}(L^{-k})=k^{\rank(M/F)-c(M)+1}\mu^+(M/F)
\]
for all connected flats $F$ of $M$.
% This proves in particular \Cref{con:multies}. Indeed, by \Cref{lem: lowerbound2}, for any connected flat $F$ of $M$, the local multiplicity of $L^{-2}$ along the stratum indexed by $F$ is $2^{\rank(M/F)-c(M)+1}\mu^+(M/F)$, which by the previous theorem is the exponent of the defining polynomial of $((L|F)^{-1})^\vee$ in the factorization of $E_L$.
\end{proof}

\section{Other powers of linear spaces}\label{sec:otherMulties}

We briefly give some results on the generalised matroid determinant $E_{L,-k}$ when $k<2$. We start with the case $k=1$ where $E_{L,-1}=i^*r^*\text{Ch}(L^{-1})$ for the maps $i,r$ from \Cref{def:matroid_det}.
\begin{proposition}\label{lem: k=-1}
The polynomial $E_{L,-1}$ has degree $\deg(E_{L,-1}) = (d+1)\mu^+(M)$ and factorizes as
\[
E_{L,-1}(z) = \prod_{\substack{F \in \mathcal{F}(M) \\ F \text{ connected}}} (\sum_{i \in F}z_i)^{m_{F,1}} \in \KK[z].
\]
for some integers $m_{F,-1}\geq 1$.
\end{proposition}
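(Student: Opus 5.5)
The plan is to follow the proof of \Cref{th: ppalmatdet} with $k$ replaced by $-1$, so that the relevant power is $L^{-k+1}=L^{0}$. For the degree, I would repeat the argument of \cite[Theorem 3.5]{MatsubaraHeoTelen2026}. The polynomial $E_{L,-1}=i^*r^*\Chow(L^{-1})$ has degree $\deg L^{-1}=\mu^+(M)$ in each of the $d+1$ rows of Stiefel coordinates. The substitution $U=A\operatorname{diag}(z)$ is linear in $z$ in every row, so it gives total degree $(d+1)\mu^+(M)$, provided $E_{L,-1}\not\equiv 0$. This nonvanishing holds because for generic $z$ the kernel of $A_z$ is $\operatorname{diag}(z)^{-1}\ker A$, which is a generic-enough translate by the torus to miss $L^{-1}$. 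To see this, note that $x\in\ker A_z\cap L^{-1}$ with $x$ in the torus means $z\cdot x\in L^\perp$. Dimension counting on the stratified incidence variety shows that this is a proper condition.

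For the factorization, I would take the incidence variety
\[
Y=\Set{(x,z) | A_z x=0,\ x\in L^{-1}}
\]
and, as before, identify $\pi_2(Y)=\mathcal V(E_{L,-1})$ via the argument of \cite[Lemma 3.7]{MatsubaraHeoTelen2026}. I would then stratify $Y=\bigsqcup_F Y_F$ over the flats. Exactly as in step (ii) of that proof, $Y_F$ is a vector bundle over $L^{-1}\cap T_F$ with fibre $\Set{z | A_x z=0}$ of dimension $n-\rank_M(F)$. Hence $\dim Y_F=n-1$, and $\overline{\pi_2(Y_F)}$ is irreducible of dimension at most $n-1$. The step (iii) argument with block-shaped $A$ carries over verbatim and reduces the union to connected flats.

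The key new step is to identify $\overline{\pi_2(Y_F)}$ with the hyperplane $\{\sum_{i\in F}z_i=0\}$, the dual of the point $L_F^{0}=[\mathbb 1_F]$. I would show this by a direct computation. A point $x\in L^{-1}\cap T_F$ can be written as $x_i=1/\ell_i(t)$ for $i\in F$ and $x_i=0$ otherwise, at least on a dense subset. Then
\[
A_zx=\sum_{i\in F}a_i z_i/\ell_i(t),
\]
and pairing with $t$ gives $t\cdot A_zx=\sum_{i\in F}z_i$. Hence $\pi_2(Y_F)\subseteq\{\sum_{i\in F}z_i=0\}$. This is the analogue of step (i), since $T_{[\mathbb 1_F]}L_F^{0}$ is just the point.

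For equality I need $\overline{\pi_2(Y_F)}$ to be a hypersurface when $F$ is connected. Since both sides are irreducible and the hyperplane has dimension $n-1$, it suffices to show that $\pi_2|_{Y_F}$ is generically finite. This is the main obstacle. I would try to prove it by reducing to $L_F$, viewing the fiber over generic $z$ with $\sum_F z_i=0$ as the critical points of the master function $\sum_{i\in F} z_i\log\ell_i$ on $\PP(\Row A_F)$. By \cite{Proudfoot2006}-type results (the Euler characteristic count), for connected $M|_F$ this set is finite, with $\beta(M|_F)$ points, and nonempty. That gives equality for connected flats.

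The multiplicities $m_{F,-1}\geq 1$ then exist simply because every component of $\mathcal V(E_{L,-1})$ is one of these hyperplanes and each one appears.
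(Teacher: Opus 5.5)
Your proof is correct. It shares with the paper the degree count, the incidence variety $Y$ with its flat stratification, the identification $\pi_2(Y)=\VV(E_{L,-1})$ and $\dim Y_F=n-1$; it differs in how $\overline{\pi_2(Y_F)}$ is pinned down.

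The paper rewrites $A_zx=0$, $x\in L^{-1}\cap T_F$, as $(x_iz_i)_{i\in F}\in L_F^\perp$, which gives $\overline{\pi_2(Y_F)}=(L_F\star L_F^\perp)\vee\PP(\KK^{F^c})$ for every flat. It then quotes \cite{MatsubaraHeoTelen2026,briand2025} for the fact that $L_F\star L_F^\perp$ is a hypersurface, namely $\{\sum_{i\in F}z_i=0\}$, exactly when $F$ is connected. Disconnected flats are therefore discarded by a dimension drop rather than by your block-matrix step (iii).

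You instead obtain the inclusion from the Euler relation $t\cdot A_zx=\sum_{i\in F}z_i$, which is the same as $\sum_i w_iy_i=0$ for $w\in L_F$, $y\in L_F^\perp$. You get density from the critical-point count. This amounts to reproving the Hadamard-product fact, since $(L_F\cap T_F)\star L_F^\perp$ is exactly the set of weights $z_F$ admitting a critical point on $\PP(L_F)\cap T_F$. Two small corrections:
\begin{itemize}
\item The count you need is Varchenko's / Orlik--Terao's theorem (or Huh's ML-degree theorem), combined with $|\chi(\PP(L_F)\cap T_F)|=\beta(M|_F)$. It is not a result of \cite{Proudfoot2006}.
\item Only non-emptiness of the fibre over a generic point of the hyperplane is actually used. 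That alone already makes $\pi_2(Y_F)$ dense in it, so finiteness is not needed.
\end{itemize}

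In exchange for citing that theorem, your route gives more: the generic fibre of $Y_F\to\{\sum_{i\in F}z_i=0\}$ has exactly $\beta(M|_F)$ points. This is precisely the factor separating the conjectured $m_{F,-1}=\mu^+(M/F)\beta(M|_F)$ from the single-point bound $\mu^+(M/F)$ that \Cref{sec:lower_bound} yields when $k=1$. That suggests summing over the whole fibre, rather than using a single point $p$, in the Zariski--Samuel step of \Cref{lem: chowMult}. A bound $m_{F,-1}\geq\mu^+(M/F)\beta(M|_F)$ for all connected $F$ would, by \Cref{lem:combinatorialErecDeg} and $\deg E_{L,-1}=(d+1)\mu^+(M)$, already force the conjectured equality.
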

\begin{proof}
The degree formula for $E_{L,-1}$ follows from a similar argument as \cite[Theorem 3.5]{MatsubaraHeoTelen2026}:
\[
\deg(E_{L,-1}) = \deg(i)\deg(r)\deg(\Chow(L^{-1}))=(d+1)\deg(L^{-1})= (d+1)\mu^+(M).
\]

    For the second claim, fix a flat $F \in \mathcal{F}(M)$, and define
    \[
    Y_F \coloneq \set{(x,z) \in \PP^n \times (\PP^n)^* \mid A_z\cdot x = 0 \text { and } x \in L^{-1} \cap T_F}.
    \]
    Since $x_i = 0$ for $i \notin F$, the condition $A_z\cdot x = 0$ becomes
    \begin{align*}\label{eq: scalarprod}
    \sum_{j \in F} a_{ij}x_jz_j = 0 \quad i = 0,\dots,d,
    \end{align*}
    or equivalently the vector $(x_iz_i)_{i\in F}$ belongs to $L_F^\perp=\{y\in \PP(\KK^F)\mid \forall w\in L_F:\sum_{i\in F}y_iw_i=0\}$. Notice that for a fixed $z\in (\PP^n)^*$ with the property that not all $z_i=0$ for $i\in F$, there exists $x\in L^{-1}\cap T_F$ with the above property if and only if $(z_i)_{i\in F}\in (L_F\cap T_F)\star L_F^\perp$. This shows that $\overline{\pi_2(Y_F)}=(L_F\star L_F^\perp)\vee \PP(\KK^{F^c})$ where $\vee$ denotes the join of two projective varieties.

  %  This holds if and only if $ \sum_{j \in F} x_jz_j \in L_F^\perp$, or equivalently, since $x \in (L_F^\circ)^{-1}$, if $z \in L_F^\circ \star L_F^\perp \times \KK^{F^c}$, where $\star$ denotes the Hadarmard product.

    Now, the stratification of $L^{-1}$ by flats induces the stratification
    \[
    Y \coloneq \bigsqcup_{F \in \mathcal{F}(M)} Y_F \subset \PP^n \times (\PP^n)^*
    \]
    whose image under the projection $\pi_2:\PP^n \times (\PP^n)^* \rightarrow (\PP^n)^*$ is the vanishing locus of $E_{L,-1}$ by the same reasoning as in \cite[Lemma 3.7]{MatsubaraHeoTelen2026}. The above argument shows that
    \[
    \VV(E_{L,-1}) = \pi_2\big(\bigsqcup_{F \in \mathcal{F}(M)} Y_F\big) = \bigcup_{F \in \mathcal{F}(M)}(\overline{\pi_2(Y_F)}) = \bigcup_{F \in \mathcal{F}(M)} (L_F \star L_F^\perp)\vee \PP(\KK^{F^c}).
    \]
    Now, by \cite[Proposition 4.6]{MatsubaraHeoTelen2026} and \cite[Theorem 2.11]{briand2025}, $\dim(L_F \times L_F^\perp) = |F|-2$ if and only if $F$ is a connected flat, and in that case, it is the hyperplane given by $\sum_{i \in F} z_i = 0$. This completes the argument.
\end{proof}

\begin{remark}
    Define $L^0 \coloneq \{\bm{1}\}$ where $\bm{1}$ is the point with all coordinates equal to one. Then, 
    \[
    E_{L,-1} = \prod_{F \text{ connected flat}} \Delta(L_F^0)^{m_F}
    \]
    for some integers $m_F \geq 0$, matching the general expression for the factorization of the generalized matroid determinant in \Cref{th: ppalmatdet}.
\end{remark}

The proof of our main theorem does not extend to the case $k=1$. In fact all arguments in sections \ref{sec:lower_bound} and \ref{sec:computing_multi} can be adapted also to this case, but the obtained inequality is in general strict. However, there exists a determinantal formula for the Chow form of $L^{-1}$ due to \cite[Corollary 3.9, Remark 3.3]{kummer}. Using this formula we verified the following conjecture for uniform matroids and computationally for small non-uniform matroids. We also remark that this determinantal formula implies that $E_{L,-1}$ depends only on the (labeled) matroid and not on the realisation $L$.  

\begin{conjecture}
The multiplicities in \Cref{lem: k=-1} satisfy
\[
m_{F,-1}=\mu^+(M/F)\beta(M|_F)
\]
for every connected flat $F$ of $M$.
\end{conjecture}

Finally, when $k\leq -1$ we have the same degree computation as in the previous case with exactly the same proof.

\begin{lemma}\label{lem: positivek}
Let $k \in \ZZ_{\geq 1}$ and fix a linear subspace $L\subseteq \PP^n$ be of dimension $d$. Set $E_{L,k}\coloneq i^*r^*\Chow(L^k)$ as in \Cref{def:matroid_det}.
Then
\[
\deg(E_{L,k}) = (d+1)k^{d-c(M)+1}.
\]
\end{lemma}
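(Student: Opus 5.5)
The plan is to run the argument of \cite[Theorem 3.5]{MatsubaraHeoTelen2026} used for \Cref{th: ppalmatdet}, with $L^{-k}$ replaced by $L^k$:
\[
\deg E_{L,k}=\deg(i)\cdot\deg(r)\cdot\deg\Chow(L^k).
\]
Here $\deg(i)=1$ because $i$ is linear. The factor $\deg(r)=d+1$ arises because $r$ is given by the maximal minors of a $(d+1)\times(n+1)$ matrix, so the Plücker coordinates pulled back along $r\circ i$ are homogeneous of degree $d+1$ in $z$. The Chow form has degree $\deg L^k$. As in the earlier proof, one must check that the pullback is not identically zero. For this I will argue as in \cite[Lemma 3.7]{MatsubaraHeoTelen2026}: a generic $z$ gives a kernel $\PP(\ker A_z)$ of codimension $d+1$ that misses $L^k$, so $E_{L,k}$ is a nonzero form of degree $(d+1)\deg L^k$. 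I will also note that, exactly as in the earlier proof, the map $r\circ i$ is defined on a dense open set, namely where $A_z$ has full rank. This holds whenever $z$ has all coordinates nonzero.

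The substantive step is therefore to show
\[
\deg L^k=k^{d-c(M)+1}\qquad (k\ge 1).
\]
The map $\widetilde h_k\colon L\to L^k$, $x\mapsto x^k$, is a finite morphism, because for $k\ge1$ it is defined everywhere on $L$ and is the restriction of the finite map $\phi_k$ on $\PP^n$. By \Cref{lem:degreeofkthpower}, its degree is $k^{c(M)-1}$. Pulling back a hyperplane along $\phi_k$ gives a degree-$k$ hypersurface. Intersecting with $d$ general hyperplanes therefore gives
\[
\deg(\widetilde h_k)\cdot\deg L^k=\int_L (kH)^d=k^d\deg L=k^d.
\]
Hence $\deg L^k=k^{d}/k^{c(M)-1}=k^{d-c(M)+1}$, as required.

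The main obstacle is justifying this projection formula: I need the preimages of $d$ general hyperplanes to meet $L$ transversally in exactly $\deg(\widetilde h_k)\deg L^k$ points. I would first try Bertini on the base-point-free linear system $\phi_k^*|\mathcal O(1)|$ restricted to $L$. Then every point of the finite intersection is a reduced point. By genericity it lies over a point of $L^k$ where $\widetilde h_k$ is étale with fibre size $\deg(\widetilde h_k)$, and this is valid in characteristic zero.
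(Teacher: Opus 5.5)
Your proposal is correct and follows the paper's own argument: the paper's proof only says that the computation $\deg E_{L,k}=\deg(i)\deg(r)\deg\Chow(L^k)=(d+1)\deg L^k$ goes through exactly as in \Cref{lem: k=-1} and \cite[Theorem 3.5]{MatsubaraHeoTelen2026}. You supply the step the paper leaves implicit, namely $\deg L^k=k^d/\deg(\widetilde h_k)=k^{d-c(M)+1}$, via \Cref{lem:degreeofkthpower} and the projection formula, and your nonvanishing check becomes rigorous once you note the following dimension count: the incidence variety $\{(x,z)\mid x\in L^k,\ A_z x=0\}$ has dimension at most $(d-\rank_M(E\setminus F))+(n-\rank_M(F))\le n-1$ over the points of $L^k$ with support $F$, by submodularity of $\rank_M$.
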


Based on computational experiments, it seems that, assuming $M$ is connected, $E_{L,k}$ is an irreducible polynomial equal to the defining equation of the dual hypersurface $(L^{k+1})^\vee$. A similar method as has been used in the proof of \Cref{th: ppalmatdet} can show that the zero locus of $E_{L,k}$ still contains $(L^{k+1})^\vee$. We note also that in this case the analogous results from \cite{briand2025} (most notably the dual degree of $L^{k}$), which we have used throughout our article, have not yet been established, compare \cite[Remark 4.13]{briand2025}. 

\begin{conjecture}
\label{conj: positivek}
    Let $k\geq 1$ and let $L\subseteq \PP^d$ be of dimension $d$ and such that its matroid $M$ is connected, then $E_{L,k}=\Delta(L^{k+1})$. In particular $\deg(L^{k+1})^\vee=(d+1)k^d$.
\end{conjecture}

\begin{example}
For $k=1$, one can deduce \Cref{conj: positivek} from elementary considerations. A direct calculation shows that 
\[
\Res(L)(u) = \det(UA^{\textsf{T}}), \qquad E_{L,1}(z) = \det(A\operatorname{diag}(z) A^{\textsf{T}}) = \sum_{S \in \binom{[n+1]}{d+1}} \det(A_S)^2 z^S,
\]
where the last formula is the Cauchy--Binet formula. This expansion shows that $S$ is in the support of $E_{L,1}$ if and only if $S$ is a basis of the matroid $M=\Matroid(L)$. If $M$ is connected, then such a polynomial must be irreducible, see \cite[Proposition 4.6]{Choe2004}. As $\Delta(L^2) \mid E_{L,1}$, these two polynomials must be equal up to scaling.
\end{example}

\printbibliography
\end{document}